\newtheorem{thm}{Theorem}[section]
\newtheorem{lem}[thm]{Lemma}
\newtheorem{cor}[thm]{Corollary}
\newtheorem{prop}[thm]{Proposition}
\theoremstyle{definition}
\newtheorem{defi}[thm]{Definition}
\newtheorem{ex}[thm]{Example}
\theoremstyle{remark}
\newtheorem{rem}[thm]{Remark}
\newcommand{\eps}{\varepsilon}
\newcommand{\pin}{$\text{pin}^-$}
\newcommand{\mfdspace}{\mathcal{M}(n,d)}
\newcommand{\mfdspaceCn}{\mathcal{M}(n,d,C)}
\newcommand{\mfdspaceCnn}{\mathcal{M}(n+1,d,C)}
\newcommand{\opspace}{\mathcal{O}(n,d,C)}
\newcommand{\dGH}{d_{\text{GH}}}
\newcommand{\bbN}{\mathbb{N}}
\newcommand{\bbS}{\mathbb{S}}
\newcommand{\bbR}{\mathbb{R}}
\newcommand{\bbC}{\mathbb{C}}
\newcommand{\bbZ}{\mathbb{Z}}
\newcommand{\Ma}{M_a}
\newcommand{\ga}{g_a}
\newcommand{\tga}{\tilde{g}_a}
\newcommand{\ha}{h_a}
\newcommand{\fa}{f_a}
\newcommand{\Aa}{A_a}
\newcommand{\Ta}{T_a}
\newcommand{\Fa}{F_a}
\newcommand{\cFa}{\mathcal{F}_a}
\newcommand{\la}{l_a}
\newcommand{\Ra}{R_a}
\newcommand{\Za}{Z_a}
\newcommand{\Ka}{K_a}
\newcommand{\tZa}{\tilde{Z}_a}
\newcommand{\cZa}{\mathcal{Z}_a}
\newcommand{\phia}{\varphi_a}
\newcommand{\Da}{D_a}
\newcommand{\tDa}{\tilde{D}_a}
\DeclareMathOperator{\inj}{inj}
\DeclareMathOperator{\diam}{diam}
\DeclareMathOperator{\vol}{vol}
\DeclareMathOperator{\grad}{grad}
\DeclareMathOperator{\dvol}{dvol}
\DeclareMathOperator{\arsinh}{arsinh}
\DeclareMathOperator{\Hom}{Hom}
\DeclareMathOperator{\dist}{dist}
\DeclareMathOperator{\GL}{GL}
\DeclareMathOperator{\Spin}{Spin}
\DeclareMathOperator{\SO}{SO}
\DeclareMathOperator{\D}{D}
\DeclareMathOperator{\Aff}{Aff}
\DeclareMathOperator{\Pin}{Pin}
\begin{document}

\title[Dirac operators under codimension one collapse]{Dirac operators with $W^{1,\infty}$-potential under codimension one collapse}
\author{Saskia Roos}
\address{Max-Planck-Institut für Mathematik, Vivatsgasse 7, 53111 Bonn, Germany}
\email{saroos@mpim-bonn.mpg.de}
\begin{abstract}
We study the behavior of the spectrum of the Dirac operator together with a symmetric $W^{1, \infty}$-potential on spin manifolds under a collapse of codimension one with bounded sectional curvature and diameter. If there is an induced spin structure on the limit space $N$ then there are convergent eigenvalues which converge to the spectrum of a first order differential operator $D$ on $N$ together with a symmetric $W^{1,\infty}$-potential. In the case of an orientable limit space $N$, $D$ is the spin Dirac operator $D^N$ on $N$ if the dimension of the limit space is even and if the dimension of the limit space is odd, then $D = D^N \oplus -D^N$.
\end{abstract}
\maketitle

\section{Introduction}

After studying the structure of collapsing sequences of manifolds under bounded sectional curvature and diameter, done by Cheeger, Fukaya and Gromov, ( see \cite{CheegerFukayaGromov} and the references therein) one of the next questions arising was how the spectrum of differential operators behaves in the limit of a collapsing sequence.

As for the Laplacian on functions, Fukaya showed in \cite{FukayaLaplace} that if a sequence of manifolds with uniform bounded sectional curvature and diameter converges in  the measured Gromov-Hausdorff-topology, then the eigenvalues of the Laplace operator converge to the eigenvalues of the Laplacian on the limit space with respect to a limit measure even in the case that the limit happens to be a smooth manifold. This result was generalized to the Laplacian on $p$-forms by Lott in \cite{LottLaplaceSmooth}, \cite{LottLaplaceSingular}. Using the Bochner-type formula for Dirac operators on $G$-Clifford bundles on manifolds, where $G \in \lbrace \SO(n), \Spin(n) \rbrace$, Lott proved similar results for Dirac eigenvalues under collapse with bounded sectional curvature and diameter \cite{LottDirac}. His results also include the Dirac operator acting on differential forms considering the measured Gromov-Hausdorff topology.

In this paper, we consider sequences $(\Ma, \ga)_{a \in \bbN}$ of spin manifolds with bounded sectional curvature and diameter such that their Gromov-Hausdorff limit $(N,h)$ has codimension one. This already implies that $N$ is a Riemannian orbifold, see \cite[Proposition 11.5]{FukayaOrbifold}. By restricting to the setting of spin manifolds we are able to show that the spectrum of the Dirac operator together with a uniform bounded symmetric $W^{1, \infty}$-potential converges again to the Dirac operator with symmetric $W^{1, \infty}$-potential on the limit space $N$, where the Dirac operator is taken with respect to the standard measure $\dvol(h)$. In particular, we do not need to consider the limit measure in the measured Gromov-Hausdorff topology, as in \cite{LottDirac}, which is in general different to the standard measure $\dvol(h)$. We restrict ourselves to the case of collapse of codimension one as for higher codimensions the situation is more complicated, see Remark \ref{HigherCodimension}. 

We use the techniques of \cite{Ammann}, where Dirac operators on collapsing $\bbS^1$-principal bundles were considered. One of the main differences to \cite{Ammann} is that he assumes the norm of the curvature of the $\bbS^1$-principal bundle times the length of the fiber to vanish in the limit. This assumption is  not fulfilled for general collapsing $\bbS^1$-principal bundles with bounded sectional curvature and diameter, see Example \ref{NonVanishingA}. However, removing this assumption leads to an additional zero-order term in the limit. In addition, the limit space of a collapsing sequence of spin manifolds can happen to be nonorientable. In that case, we have to deal with an $\bbS^1$-bundle with affine structure group which is not necessarily an $\bbS^1$-principal bundle. 

We consider Dirac operators  with symmetric $W^{1, \infty}$-potential $\Za$ and show that in the limit the spectrum  of $\Da + \Za$ converges to the spectrum of a Dirac operator with a $W^{1, \infty}$-potential on the limit space. Furthermore, we show that, similar to \cite{Ammann} and \cite{LottDirac}, there are only convergent eigenvalues if and only if the spin structure on the manifolds $(\Ma,\ga)$ induce the same spin rep.\ \pin structure on the limit space for all $a \in \bbN$.

The paper is structured as follows. First we recall the structure of collapse of codimension one from \cite{Roos}. Then we discuss the notion of a projectable spin structure, which was first formulated by \cite{MoroianuDiss} for general $G$-principal bundles over manifolds. As, in that case, any sufficiently collapsed manifold is the total space of an $\bbS^1$-orbifold bundle over the limit space $N$, we extend the notion of projectable spin structures appropriately. In the next section, the behavior of $W^{1, \infty}$-bounded operators on spin manifolds under collapse of codimension one with bounded curvature and diameter is discussed. In particular we show under which circumstances one obtains convergence to an operator on the limit space. Combining everything, we prove the convergence results for Dirac operators with symmetric $W^{1, \infty}$-potential under collapse of codimension one. Here we consider the cases of nonprojectable and projectable spin structures separately. In the last section we discuss the special case of Dirac operators without a potential and relate them to the results of \cite{Ammann} and \cite{LottDirac}.

\subsection*{Acknowledgments}
I would like to thank Bernd Ammann and Werner Ballmann for their continuous support and enlightening discussions. Furthermore, I am very grateful for the hospitality and the support of the Max-Planck Institute for Mathematics in Bonn.

\section{Codimension one collapse}

Let $\mfdspace$ be the space of all closed $n$-dimensional Riemannian manifolds with $\diam(M) \leq d$ and $\vert \sec \vert \leq 1$. In \cite{Roos} we introduced the subspace
\begin{align*}
\mfdspaceCn \coloneqq \left\lbrace (M,g) \in \mfdspace : C \leq \frac{\vol(M)}{\inj(M)} \right\rbrace
\end{align*}
and showed the following properties.

\begin{thm}\label{CollapseSpace}
Let $(\Ma, \ga)_{a \in \bbN}$ be a sequence in $\mfdspaceCn$ which Gromov-Hausdorff converges to a lower dimensional compact metric space $N$. Then
\begin{enumerate}
	\item $N$ is an $(n-1)$-dimensional Riemannian orbifold with a $C^{1, \alpha}$-metric $h$.
	\item $\vol(N) \geq V$ for some positive constant $V \coloneqq V(n,d,C)$.
	\item $\Vert \sec(N) \Vert_{L^{\infty}} \leq K$ for some positive constant $K \coloneqq K(n,d,C)$.
\end{enumerate} 
\end{thm}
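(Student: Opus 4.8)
The plan is to realise the collapse through the Cheeger--Fukaya--Gromov structure theory and then to read off (1)--(3) from the (nearly) Riemannian submersions $f_a\colon\Ma\to N$ it provides. I would first pass to a subsequence and apply Fukaya's frame-bundle construction: the orthonormal frame bundles $F\Ma$, carrying their canonical $O(n)$-invariant metrics, have bounded geometry and subconverge in the equivariant Gromov--Hausdorff topology to a $C^{1,\alpha}$-manifold $Y$ with an $O(n)$-action, where $N=Y/O(n)$; by \cite[Proposition 11.5]{FukayaOrbifold} this quotient is a Riemannian orbifold, which disposes of most of (1). It then remains to fix the dimension of $N$ and the regularity of its metric $h$. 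Since $\Vert\sec\Vert\le1$ is only a $C^{0}$-bound, the metrics on $Y$ (equivalently, on the local covers of $\Ma$) converge merely in $C^{1,\alpha}$---harmonic coordinates together with elliptic regularity---so the quotient orbifold metric $h$ is of class $C^{1,\alpha}$ in orbifold charts, and nothing better is to be expected.

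The hypothesis $\vol(M)/\inj(M)\ge C$ enters exactly in fixing the dimension and in proving (2). For large $a$, $\Ma$ fibres over $N$---over the regular locus, and over the local uniformising covers of the singular set---with infranilmanifold fibres; let $\ell_a(x)$ denote the length of the fibre over $x$, with extrema $\ell_a^{\min}\le\ell_a^{\max}$. A two-sided curvature bound controls the logarithmic variation of $x\mapsto\ell_a(x)$, so $\diam\le d$ gives $\ell_a^{\max}\le c(n,d)\,\ell_a^{\min}$, and the coarea formula for $f_a$ yields
\[
\vol(\Ma)=\int_N\ell_a\,\dvol(h_a)\le c(n,d)\,\ell_a^{\min}\,\vol(N)\,(1+o(1)).
\]
On the other hand, once $\inj(\Ma)<\pi$ the injectivity radius equals half the length of a shortest closed geodesic, which in a collapse is comparable---with constants depending only on $n,d$---to the shortest collapsing circle, so $\inj(\Ma)\ge c'(n,d)\,\ell_a^{\min}$. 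Combining, $C\le\vol(\Ma)/\inj(\Ma)\le c''(n,d)\,\vol(N)\,(1+o(1))$, which gives (2) with $V=V(n,d,C)$ and simultaneously forces the fibres to be one-dimensional: a fibre of dimension $k\ge2$ would contribute $k-1$ further collapsing factors to $\vol(\Ma)/\inj(\Ma)$, driving it below $C$ for large $a$ (using that $\vol(N)$ is bounded above by Bishop--Gromov, $N$ having curvature $\ge-1$ in the Alexandrov sense as a limit of manifolds with $\sec\ge-1$). Hence $\dim N=n-1$.

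For (3) I would transport the curvature bound of $\Ma$ to $N$ via O'Neill's formula. After replacing $\ga$ by its canonical smoothing $\ga^\eps$ (which changes $\sec,\vol,\inj$ by at most $\eps$), the map $f_a\colon(\Ma,\ga^\eps)\to(N,h_a)$ is a Riemannian orbifold submersion with circle fibres and $h_a\to h$. For orthonormal horizontal $X,Y$ with horizontal lifts $\tilde X,\tilde Y$, O'Neill gives $\sec_{h_a}(X,Y)=\sec_{\ga^\eps}(\tilde X,\tilde Y)+\tfrac34\,|[\tilde X,\tilde Y]^{\mathcal V}|^2$, where $[\,\cdot\,,\,\cdot\,]^{\mathcal V}$ is the vertical component of the bracket, i.e.\ a multiple of the curvature of the $\bbS^1$-connection. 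So $\sec_{h_a}\ge\sec_{\ga^\eps}\ge-1-\eps$ immediately; and although this connection curvature need not vanish in the collapse (Example \ref{NonVanishingA}), the structure theory \cite{CheegerFukayaGromov} bounds it by a constant depending only on $n,d$, whence $\sec_{h_a}\le1+\eps+c(n,d)$. Letting $\eps\to0$ and then $a\to\infty$, the $C^{1,\alpha}$-convergence $h_a\to h$ transfers these two-sided bounds to the almost-everywhere-defined sectional curvature of $N$, which gives (3) with $K=K(n,d,C)$.

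The main obstacle is making this ``nearly Riemannian submersion with circle fibres'' picture precise near the orbifold singularities: one has to carry out the whole argument equivariantly on the frame bundles, smooth the metrics along the lines of \cite{CheegerFukayaGromov} while keeping both the curvature and the fibre lengths controlled, and identify the orbifold groups with the (affine) holonomy of the local nilpotent Killing structure of the collapsing circle. Once that framework is in place, each of (1)--(3) is a short computation; the genuine care goes into checking that every constant depends only on $n,d,C$ and into reading O'Neill's identity weakly for a metric that is only $C^{1,\alpha}$.
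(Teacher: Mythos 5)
The paper itself contains no proof of Theorem \ref{CollapseSpace}: it is recalled verbatim from \cite{Roos}, so I can only measure your outline against the strategy that the surrounding material (Fukaya's fibration theorem, Lemma \ref{Tensors}, Corollary \ref{SubmersionBounds}, \cite[Prop.\ 1.4]{Roos}) makes visible. On that comparison, your treatment of (1) and (2) is the intended one: frame-bundle equivariant convergence plus \cite[Proposition 11.5]{FukayaOrbifold} for the orbifold structure, $C^{1,\alpha}$ from harmonic coordinates, and the coarea/systole argument --- with the comparability of $\inj(\Ma)$ to the shortest fibre, which is exactly \cite[Proposition 1.4]{Roos} --- both to pin down $\dim N=n-1$ and to get the volume lower bound. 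These parts are sound modulo the cited structure theory.

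The genuine gap is in (3). Your upper curvature bound rests on the assertion that ``the structure theory \cite{CheegerFukayaGromov} bounds'' the vertical part of $[\tilde X,\tilde Y]$, i.e.\ the integrability tensor $A_a$ (equivalently $l_aF_a$, by Lemma \ref{Tensors}), by a constant $c(n,d)$. That is precisely the nontrivial content of the statement, not something CFG hands you: the fibration theorem controls the second fundamental form of the fibres (the $T$-tensor, via \cite[Theorem 4.1]{RongSummary}), not $A_a$, and indeed in codimension $\ge 2$ no such bound holds --- the limit curvature genuinely blows up (Remark \ref{HigherCodimension}, \cite{FukayaBoundary}, \cite{NaberTian}), so any argument quoting only $n$ and $d$ proves too much. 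Note also that the paper runs the implication in the opposite direction: Corollary \ref{SubmersionBounds} \emph{derives} $\vert A_a\vert\le C_A$ from Theorem \ref{CollapseSpace}(3) via O'Neill, so invoking that bound here would be circular. To close the gap you must produce the $A$-bound independently, using that the fibres are circles: the vertizontal O'Neill identity
\begin{align*}
\langle R(X,U)U,X\rangle=\langle(\nabla_XT)(U,U),X\rangle-\vert T_UX\vert^2+\vert A_XU\vert^2
\end{align*}
for $U$ the unit vertical vector bounds $\vert A_XU\vert$ by $\vert\sec(\Ma)\vert$, $\vert T_a\vert$ and $\vert\nabla T_a\vert$, and for one-dimensional fibres $A_XY$ is recovered from $A_XU$; so what you actually need, and should state, is the $C^1$-control on the second fundamental form of the fibres for the CFG-invariant metric. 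Only then does the horizontal O'Neill formula give $\sec(N)\le K(n,d,C)$.
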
 

We always talk about the limit space $N$ being a Riemannian orbifold where Riemannian manifolds are included as a special case. For background material about Riemannian orbifold and orbifold bundles, we refer to \cite[Chapter 4]{Boyer} and \cite[Chapter 13]{Thurston}.

By Fukaya's fibration theorem \cite{FukayaCollapse1}, applied to the space $\mfdspaceCn$ there is a constant $\eps(n,d)$ such that if the Gromov-Hausdorff distance between $(M,g) \in \mfdspaceCn$ and  some lower dimensional space $N$ in the $\dGH$-closure of $\mfdspaceCn$ is smaller than $\eps(n,d,C)$ then there is a map $f: M \rightarrow N$ such that $(M,N,f)$ is an $\bbS^1$-orbifold bundle with affine structure group. At this point we want to recall the following theorem proven by Cheeger, Fukaya and Gromov in \cite{CheegerFukayaGromov}, which we adjusted here to our setting.

\begin{thm}[{\cite{CheegerFukayaGromov}}]\label{invariantMetric}
Let $(M,g) \in \mfdspaceCn$ and assume that there is a $(n-1)$-dimensional Riemannian orbifold $N$ with $\dGH(M,N) \leq \eps(n,d)$. Then $(M,g)$ is an $\bbS^1$-orbifold bundle with structure group in $\Aff(\bbS^1)$. Furthermore there is a so-called invariant metric $\tilde{g}$ such that $\Vert \tilde{g} - g\Vert_{C^1} \leq C(n) \dGH(M,N)$ and such that $\bbS^1$ acts locally by isometries on $(M,\tilde{g})$. In particular, if $M$ and $N$ are orientable then $M \rightarrow N$ is an $\bbS^1$-principal orbifold bundle.
\end{thm}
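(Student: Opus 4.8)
The plan is to derive the statement from Fukaya's fibration theorem together with the Cheeger--Fukaya--Gromov construction of invariant metrics, specialised to codimension one. First I would invoke Fukaya's fibration theorem, which applies by Theorem~\ref{CollapseSpace} since $N$ then has the bounded geometry it requires: once $\dGH(M,N)$ is below a threshold $\eps(n,d)$ there is a smooth fibre bundle $f\colon M\to N$ in the orbifold sense whose fibres are infranilmanifolds and whose structure group is affine. Because the collapse has codimension one, each fibre is a closed $1$-manifold, hence a circle, and a $1$-dimensional infranil fibre bundle has structure group in $\Aff(\bbS^1)\cong\SO(2)\rtimes\bbZ/2$. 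This already gives that $(M,g)$ is an $\bbS^1$-orbifold bundle with structure group in $\Aff(\bbS^1)$.

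The substantive part is the invariant metric. Here I would pass to the orthonormal frame bundle $\pi\colon FM\to M$, equipped with the canonical metric built from the Levi-Civita connection and a bi-invariant metric on $\mathrm{O}(n)$, so that the $\mathrm{O}(n)$-action on $FM$ is isometric; the key point that removes the orbifold singularities is that $FN$ is a genuine manifold of bounded geometry to which $FM$ collapses, again with codimension one. Applying the fibration theorem on $FM$ and then running the Cheeger--Fukaya--Gromov averaging (center-of-mass) procedure equivariantly with respect to $\mathrm{O}(n)$, one averages $g_{FM}$ over the local fibre $\bbS^1$-orbits to obtain a metric $\tilde g_{FM}$ on $FM$ that is simultaneously $\mathrm{O}(n)$-invariant and invariant under the local fibre action. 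Since the fibres have diameter $\lesssim \dGH(M,N)$ and $\vert\sec\vert\le 1$, the metric is uniformly $C^1$-controlled on Fukaya's local covers and varies along the fibres by $O(\dGH(M,N))$ in $C^1$, so the averaging changes it by $O(\dGH(M,N))$ in $C^1$. Being $\mathrm{O}(n)$-invariant, $\tilde g_{FM}$ descends to a metric $\tilde g$ on $M$ with $\Vert\tilde g-g\Vert_{C^1}\le C(n)\,\dGH(M,N)$, and the local fibre action descends to a local isometric $\bbS^1$-action on $(M,\tilde g)$.

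Finally, for the orientability statement I would observe that the structure group reduces from $\Aff(\bbS^1)$ to $\bbS^1$ exactly when its reflection component, a class in $H^1(N;\bbZ/2)$ comparing the behaviour of $w_1$ along the fibres with the orientability of $N$, vanishes; if both $M$ and $N$ are orientable this $\bbZ/2$-part must be trivial, so $M\to N$ is an $\bbS^1$-principal orbifold bundle. I expect the main obstacle to be the equivariant smoothing step on $FM$: pushing the Cheeger--Fukaya--Gromov averaging through the orbifold setting while keeping the quantitative $C^1$-estimate linear in $\dGH(M,N)$ is the technical heart, since one must control how the local Killing fields of the collapse and the averaged metric depend on the collapse scale uniformly over the possibly singular base.
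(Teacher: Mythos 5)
The paper offers no proof of this statement: it is quoted directly from \cite{CheegerFukayaGromov} (the text explicitly says the theorem is ``adjusted here to our setting''), so there is nothing internal to compare against. Your outline --- Fukaya's fibration theorem forcing circle fibres with structure group in $\Aff(\bbS^1)$, the passage to the frame bundle with $\mathrm{O}(n)$-equivariant center-of-mass averaging to produce the invariant metric with a $C^1$-bound linear in $\dGH(M,N)$, and triviality of the orientation character of the vertical bundle when $M$ and $N$ are both orientable --- is a faithful reconstruction of the standard Cheeger--Fukaya--Gromov argument being cited, and you correctly identify the quantitative equivariant smoothing as the technical core.
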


\begin{rem}
As we are interested of collapsing sequences of spin manifolds, we only deal with the case of collapsing sequences of orientable manifolds.
\end{rem}

If the limit space $N$ is nonorientable then we can consider its orientation covering $\hat{N}$ and the pullback bundle $\hat{M}$ of the $\bbS^1$-orbifold bundle $M$. Since the structure group bundle $M \rightarrow N$ lies in $\Aff(\bbS^1) \cong\bbS^1 \rtimes \lbrace -1 , 1 \rbrace$ it follows that $\hat{M} \rightarrow \hat{N}$ is an $\bbS^1$-principal orbifold bundle. 

For simplicity we consider the case of an orientable limit space $N$ and explain, if needed, the modifications for the nonorientable case. In that case any sufficiently collapsed manifold in $\mfdspaceCn$ is an $\bbS^1$-principal orbifold bundle over its limit space. Moreover, by Theorem \ref{invariantMetric} $\bbS^1$ acts on $(M, \tilde{g})$ isometrically for a nearby metric $\tilde{g}$.

For such an $\bbS^1$-principal orbifold bundle $f: (M,g) \rightarrow (N,h)$ with $f$ being a Riemannian submersion, we fix the following notation:
\begin{enumerate}
\item $K$ is the Killing vector field on $M$ induced by the $\bbS^1$-action.
\item $l \coloneqq \vert K \vert$.
\item $i \omega : TM \rightarrow i \bbR$ is the unique connection form such that $\ker (\omega)$ is orthogonal to the fibers with respect to $g$.
\item $F \coloneqq d \omega$ is the curvature form of $\omega$ 
\item $\mathcal{F}$ is the unique two form on $N$ such that $f^{\ast} \mathcal{F} = l F$.
\end{enumerate}

Since we want to use O'Neill's formulas we recall the two fundamental tensors for Riemannian submersion: The $T$-tensor which is related to the second fundamental form of the fibers, and the $A$-tensor which is related to the integrability of the horizontal distributions. Straightforward calculations show the relations between these two tensors and the data of the $\bbS^1$-principal orbifold bundle. Since we will most of the time refer to objects on the limit space it is convenient to consider collapsing sequences in $\mfdspaceCnn$ such that the limit space is $n$-dimensional.

\begin{lem}\label{Tensors}
Let $f:(M,g) \rightarrow (N,h)$ be an $\bbS^1$-orbifold bundle such that $f$ is a Riemannian submersion. For a local orthonormal frame $(e_0, e_1 \ldots, e_n)$ on $M$ where $e_0$ is vertical and $e_1, \ldots, e_n$ horizontal, there are the following identities
\begin{align*}
T(e_0, e_0) &= - \frac{1}{l} \grad(l) , \\
T(e_0, e_i) &= - \frac{1}{l} e_i(l) e_0 , \\
A(e_i, e_0) &= \frac{l}{2} \sum_{j=1}^n F(e_i,e_j)e_j , \\
A(e_i, e_j) &= - \frac{l}{2} F(e_i, e_j) e_0 .
\end{align*}
\end{lem}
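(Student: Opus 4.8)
The plan is to compute the two O'Neill tensors $T$ and $A$ directly from their definitions by testing them against the chosen local orthonormal frame $(e_0, e_1, \ldots, e_n)$, using the explicit $\bbS^1$-bundle data: the vertical field $e_0 = K/l$, the connection form $\omega$ with horizontal distribution $\ker(\omega)$, and the curvature $F = d\omega$. Recall that for vector fields $U, V$ on $M$ one has $T_U V = h(\nabla_{\mathcal{V}U} \mathcal{V}V) + \mathcal{V}(\nabla_{\mathcal{V}U}\mathcal{H}V)$ and $A_U V = \mathcal{V}(\nabla_{\mathcal{H}U}\mathcal{H}V) + \mathcal{H}(\nabla_{\mathcal{H}U}\mathcal{V}V)$, where $\mathcal{V}, \mathcal{H}$ denote vertical and horizontal projections; since $e_0$ is vertical and the $e_i$ are horizontal, the four expressions in the statement are exactly the components $T(e_0,e_0)$, $T(e_0,e_i)$, $A(e_i,e_0)$, $A(e_i,e_j)$ in that frame.

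First I would handle the $T$-tensor. Since the fibers are one-dimensional, the second fundamental form of a fiber is determined by $\nabla_{e_0} e_0$, and its horizontal part is governed by how the fiber length $l = |K|$ varies: differentiating $g(K,K) = l^2$ in a horizontal direction $e_i$ gives $g(\nabla_{e_i} K, K) = l\, e_i(l)$, and combining this with the fact that $K$ is Killing (so $\nabla_{\cdot} K$ is skew) yields $g(\nabla_{e_0} e_0, e_i) = -\tfrac1l e_i(l)$, hence $T(e_0,e_0) = -\tfrac1l\grad(l)$. The identity $T(e_0, e_i) = -\tfrac1l e_i(l)\, e_0$ then follows either from the symmetry property $g(T_{e_0}e_0, e_i) = -g(T_{e_0}e_i, e_0)$ of the $T$-tensor on vertical arguments, or by a parallel direct computation of $\mathcal{V}(\nabla_{e_0} e_i)$.

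For the $A$-tensor I would use the standard identity $A_X Y = \tfrac12\mathcal{V}[X,Y]$ for horizontal $X,Y$, which ties directly to the curvature: $\omega([e_i,e_j]) = -d\omega(e_i,e_j) = -F(e_i,e_j)$ since $\omega(e_i)=\omega(e_j)=0$, and translating from the connection form to the actual vertical vector (using $e_0 = K/l$ and $\omega(K) = i$, so the $i\bbR$-valued form $i\omega$ must be converted with the factor $l$) gives $\mathcal{V}[e_i,e_j] = -l\,F(e_i,e_j)\,e_0$, whence $A(e_i,e_j) = -\tfrac{l}{2}F(e_i,e_j)\,e_0$. The last identity, $A(e_i,e_0) = \tfrac{l}{2}\sum_j F(e_i,e_j)e_j$, comes from the skew-symmetry relation $g(A_{e_i}e_0, e_j) = -g(A_{e_i}e_j, e_0)$ between the mixed and horizontal components of $A$, which immediately converts the previous formula into the stated one. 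The main obstacle is purely bookkeeping: keeping the normalization conventions straight — in particular the relationship between $K$, $l$, $\omega$, and the $i$-factor in $i\omega$, and the sign/ordering conventions for $d\omega$, the O'Neill tensors, and the Lie bracket — so that all four factors of $l$ and the signs come out exactly as written; there is no deep difficulty once the conventions from the setup of the $\bbS^1$-principal orbifold bundle are fixed.
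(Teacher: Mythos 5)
The paper itself offers no proof of this lemma (it is dismissed as a ``straightforward calculation''), and your outline is precisely the intended computation: unwind O'Neill's definitions of $T$ and $A$ in the adapted frame, use that $K$ is Killing with $\vert K\vert = l$ for the $T$-components, and use $A_XY=\tfrac12\mathcal{V}[X,Y]$ together with $d\omega(e_i,e_j)=-\omega([e_i,e_j])$ and the normalization $\omega(K)=1$ for the $A$-components. Your derivations of $T(e_0,e_0)$, $A(e_i,e_j)$ and $A(e_i,e_0)$ are correct as written.

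There is, however, a genuine sign problem in your treatment of the second identity. You correctly obtain $g(T_{e_0}e_0,e_i)=-\tfrac1l e_i(l)$, and you correctly quote O'Neill's alternation property $g(T_{e_0}e_0,e_i)=-g(T_{e_0}e_i,e_0)$; but combining these two facts gives $g(T_{e_0}e_i,e_0)=+\tfrac1l e_i(l)$, i.e.\ $T(e_0,e_i)=+\tfrac1l e_i(l)\,e_0$, not the minus sign you assert. A direct check confirms this: since the horizontal lift $e_i$ is $\bbS^1$-invariant, $[K,e_i]=0$, so $\mathcal{V}(\nabla_{e_0}e_i)=\tfrac1l\mathcal{V}(\nabla_{e_i}K)=\tfrac{1}{l^3}g(\nabla_{e_i}K,K)K=\tfrac{e_i(l)}{l}e_0$. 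In other words, the first two identities of the lemma as printed are mutually incompatible with the skew-symmetry of $T_{e_0}$, so the statement contains a sign typo in $T(e_0,e_i)$ (the formula for $T(e_0,e_0)$ is the standard mean-curvature formula and is the correct one). Your write-up silently asserts that the skewness argument delivers the printed sign, which it does not; you should either flag the discrepancy or correct the sign. Everything else is sound, modulo the normalization bookkeeping for $\omega$ that you already acknowledge.
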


Furthermore, outgoing from Theorem \ref{CollapseSpace} we obtain a uniform bounds on these tensors.

\begin{cor}\label{SubmersionBounds}
Let $(\Ma, \ga)_{a \in \bbN}$ be sequence in $\mfdspaceCnn$ collapsing to an $n$-dimensional Riemmanian orbifold $(N,h)$. Suppose further, that for each $a \in \bbN$ there is a Riemannian submersion $\fa : (\Ma, \ga) \rightarrow (N, \ha)$. Then there are positive constants $C_A \coloneqq C_A(n,d,C)$ and $C_T \coloneqq C_T(n,d,C)$ such that $\vert \Aa \vert \leq C_A$ and $\vert \Ta \vert \leq C_T$ for all $a \in \bbN$.
\end{cor}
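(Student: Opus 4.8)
The plan is to combine the explicit formulas for the fundamental tensors from Lemma \ref{Tensors} with the curvature, volume, and injectivity radius control coming from Theorem \ref{CollapseSpace}. Since $|\Aa|$ and $|\Ta|$ at a point can be expressed through the quantities $\la$, $\grad(\la)$, and $\Fa$ via Lemma \ref{Tensors}, it suffices to obtain uniform bounds on these three objects. The key geometric input is that for a sufficiently collapsed $(\Ma,\ga) \in \mfdspaceCnn$, Theorem \ref{invariantMetric} gives an invariant metric $\tilde{g}_a$ that is $C^1$-close to $\ga$ and for which $\bbS^1$ acts by isometries; after passing to this invariant metric (which changes all quantities by a controlled amount) we may assume $\fa$ is a Riemannian submersion with totally geodesic-adjacent structure, so the problem reduces to estimating $\la$ and its first derivatives and $\Fa$ in terms of $n$, $d$, $C$.

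First I would bound the fiber length $\la$. The upper bound $\la \le C_1(n,d)$ follows since $\diam(\Ma) \le d$ forces each $\bbS^1$-orbit to have length at most $2d$ (an orbit is a closed geodesic-like loop whose length is controlled by the diameter, up to the $C^1$-closeness of $\tilde{g}_a$). The lower bound $\la \ge c_1(n,d,C) > 0$ is where the condition $\vol(\Ma)/\inj(\Ma) \ge C$ enters: together with $\vol(\Ma) \le V(n,d)$ (a consequence of $|\sec| \le 1$ and $\diam \le d$ via Bishop--Gromov) it bounds $\inj(\Ma)$ from below away from $0$, and since the fibers are the shortest "collapsing directions," the injectivity radius lower bound translates into a lower bound on $\la$. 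Next, for the gradient $\grad(\la)$: here I would invoke the $C^{1,\alpha}$-regularity of the limit metric $h$ from Theorem \ref{CollapseSpace}(1) together with Cheeger--Gromov--type a priori estimates. The function $\la$ descends (after the invariant-metric reduction) to a function on $N$ whose regularity and whose first-derivative bounds come from elliptic estimates for the submersion structure equations, controlled by $\Vert \sec(N)\Vert_{L^\infty} \le K$ and $\vol(N) \ge V$; this yields $|\grad(\la)| \le C_2(n,d,C)$.

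Finally, for the curvature form $\Fa$ (equivalently $\cFa$ on $N$), I would use O'Neill's formulas in reverse: the $A$-tensor of the submersion is, by Lemma \ref{Tensors}, essentially $\tfrac{\la}{2}\Fa$, and O'Neill's curvature formula expresses $\sec$ of $\Ma$ and of $N$ in terms of $|A|^2$ plus intrinsic curvature terms. Since $|\sec(\Ma)| \le 1$ and $\Vert\sec(N)\Vert_{L^\infty} \le K$, rearranging gives $|A|^2 \le C_3(n,d,C)$ pointwise, hence $|\Aa| \le C_A$ directly; combined with the lower bound on $\la$ this also bounds $|\Fa|$, and then the $T$-tensor formulas together with the bounds on $\la$ and $\grad(\la)$ give $|\Ta| \le C_T$. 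The main obstacle I expect is the gradient estimate on $\la$: bounding $\la$ itself is soft, but controlling $\grad(\la)$ uniformly requires genuine regularity theory for the collapsed structure — one must either work directly with the $C^{1,\alpha}$-bounds on the invariant metrics provided by the smoothing/frame-bundle techniques of Cheeger--Fukaya--Gromov, or derive it from the fact that $l_a F_a$ descends to the fixed form $\cFa$ whose norm is already controlled, and bootstrap. Once $|\Aa|$ is bounded, the $T$-tensor bound is comparatively routine.
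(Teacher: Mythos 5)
Your treatment of the $A$-tensor is correct and is exactly the paper's argument: O'Neill's horizontal curvature formula gives $\vert A(\tilde X,\tilde Y)\vert^2 = \tfrac{1}{3}(\sec_N - \sec_M)$ for orthonormal horizontal lifts, and the two curvature bounds ($\vert\sec(\Ma)\vert\le 1$ and $\Vert\sec(N)\Vert_{L^\infty}\le K(n,d,C)$ from Theorem \ref{CollapseSpace}) immediately yield $\vert\Aa\vert\le C_A$. The detour through $\la$ and $\Fa$ is unnecessary for this half.

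The $T$-tensor part, however, has a genuine gap. By Lemma \ref{Tensors} the $T$-tensor is controlled by the \emph{ratio} $\vert\grad(\la)\vert/\la$, and your plan is to bound the numerator from above and the denominator from below. But a uniform positive lower bound $\la\ge c_1(n,d,C)$ is impossible: the sequence collapses, so the fibers shrink and $\la\to 0$ (this is stated explicitly later in the paper, e.g.\ in Theorem \ref{nonprojectableEigenvalues}). Your justification also misreads the defining condition of $\mfdspaceCn$: the inequality $C\le \vol(M)/\inj(M)$ bounds $\inj(M)$ from \emph{above} by $\vol(M)/C$, and since $\vol(\Ma)\to 0$ under collapse one gets $\inj(\Ma)\to 0$, not a lower bound. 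Moreover, even a uniform bound $\vert\grad(\la)\vert\le C_2$ would not suffice, since $\la\to 0$ can make the ratio blow up (consider $\la=\eps+\eps^{1/2}\sin$). What is actually needed is a direct uniform bound on $\vert\grad(\log \la)\vert$, i.e.\ on the second fundamental form of the fibers; this is a nontrivial consequence of the Cheeger--Fukaya--Gromov structure theory for invariant metrics, and the paper obtains it by citing Rong's Theorem 4.1 rather than by any soft argument. Your closing remark correctly identifies the gradient estimate as the hard point, but the route you propose to it does not close the gap.
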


\begin{proof}
By Theorem \ref{CollapseSpace} the sectional curvature of $(N, \ha)$ is uniformly bounded from above by a constant $K(n,d,C)$. Thus, the uniform bound on the $A$-tensor follows directly from O'Neill's formula. Again using the uniform bound on the sectional curvature of $(N, \ha)$ the bound on the second fundamental form follows with \cite[Theorem 4.1]{RongSummary}.
\end{proof}

Combining Lemma \ref{Tensors} and Corollary \ref{SubmersionBounds} we study the following limits.

\begin{lem}\label{COneBoundedATensor}
Let $(\Ma , \ga)_{a \in \bbN}$ be a collapsing sequence of orientable manifolds in $\mfdspaceCnn$ converging to an orientable Riemannian orbifold $(N,h)$. Further suppose that for each $a \in \bbN$ there is a Riemannian submersion $f_a: (\Ma, \ga) \rightarrow (N,\ha)$. Then there is a subsequence of $(\Ma, \ga)$ such that the corresponding sequence $ (\cFa)_{s \in \bbN}$ is uniformly bounded in $C^1(N)$.
\end{lem}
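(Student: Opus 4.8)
The plan is to bound $\cFa$ and $\nabla^{N}\cFa$ pointwise on $N$, in both cases pulling the quantity back to the total space $\Ma$ along the Riemannian submersion $\fa$, where it becomes expressible through the O'Neill tensors already controlled by Corollary~\ref{SubmersionBounds}. For the $C^{0}$-part, contracting the identity $\Aa(e_i,e_j)=-\tfrac{\la}{2}\Fa(e_i,e_j)e_0$ of Lemma~\ref{Tensors} with the unit vertical field $e_0$ gives $\la\Fa(e_i,e_j)=-2\,g(\Aa(e_i,e_j),e_0)$, so $|\la\Fa|_{\ga}\le 2|\Aa|\le 2C_A$; since $\Fa$ is horizontal and $\fa$ is a Riemannian submersion, $\cFa$ is the basic two-form with $\fa^{\ast}\cFa=\la\Fa$, whence $|\cFa|_{\ha}\circ\fa=|\la\Fa|_{\ga}$ and $\|\cFa\|_{C^{0}(N)}\le 2C_A$ for all $a$.

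To bound $\nabla^{N}\cFa$, I use that for a basic \emph{horizontal} form $\alpha=\fa^{\ast}\bar\alpha$ on $\Ma$ and horizontal lifts $X,Y,Z$ of vector fields on $N$ the comparison of $\nabla^{\Ma}$ and $\nabla^{N}$ for a Riemannian submersion gives $(\nabla^{\Ma}_X\alpha)(Y,Z)=\big(\nabla^{N}_{X_{\ast}}\bar\alpha\big)(Y_{\ast},Z_{\ast})\circ\fa$ — the correction terms have the shape $\alpha(\dots,\Aa(X,\cdot),\dots)$ and vanish because $\alpha$ is horizontal while the $\Aa$-values are vertical. Taking $\alpha=\la\Fa$ shows that $\nabla^{N}\cFa$ pulls back to $\nabla^{\Ma}(\la\Fa)$ evaluated on horizontal lifts, so $|\nabla^{N}\cFa|_{\ha}\circ\fa\le|\nabla^{\Ma}(\la\Fa)|_{\ga}$ and it suffices to bound $(\nabla^{\Ma}_{Z}(\la\Fa))(X,Y)$ for $X,Y,Z$ horizontal. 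Differentiating $\la\Fa(X,Y)=-2\,g(\Aa(X,Y),e_0)$ and noting that $\nabla^{\Ma}_{Z}e_0$ is horizontal (being orthogonal to the unit field $e_0$, which spans the one-dimensional vertical space), so that the term involving $\nabla^{\Ma}e_0$ drops out, one is left with $(\nabla^{\Ma}_{Z}(\la\Fa))(X,Y)=-2\,g\big((\nabla^{\Ma}_{Z}\Aa)(X,Y),e_0\big)$.

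Now comes the point: the $e_0$-component of $\nabla^{\Ma}\Aa$ on horizontal arguments is exactly what the second fundamental O'Neill equation evaluates, namely $g\big((\nabla^{\Ma}_{Z}\Aa)(X,Y),e_0\big)=g\big(R^{\Ma}(X,Y)Z,e_0\big)+B_{\Aa,\Ta}(X,Y,Z)$, where $B_{\Aa,\Ta}$ is a universal algebraic expression bounded in terms of $|\Aa|$ and $|\Ta|$ and — crucially — \emph{no derivative of curvature and no curvature of $N$ enter}. Since $|\sec(\Ma)|\le 1$ forces $|R^{\Ma}|\le c(n)$, and $|\Aa|\le C_A$, $|\Ta|\le C_T$ by Corollary~\ref{SubmersionBounds}, the right-hand side is bounded by a constant $C(n,d,C)$; hence $|\nabla^{\Ma}(\la\Fa)|_{\ga}\le C(n,d,C)$ on horizontal inputs, so $\|\nabla^{N}\cFa\|_{C^{0}(N)}\le C(n,d,C)$, and with the $C^{0}$-bound this yields a uniform bound for $\cFa$ in $C^{1}(N)$ with a constant depending only on $n,d,C$. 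Orbifold points of $N$ are harmless, since every estimate is local and uniform and may be read off in the local uniformizing covers, over which $\fa$ is an honest $\bbS^{1}$-principal bundle; the passage to a subsequence is needed only afterwards, for instance to extract from this bound a $C^{0,\beta}$-convergent limit of the $\cFa$ through the compact embedding $C^{1}(N)\hookrightarrow C^{0,\beta}(N)$.

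The step I expect to be the real obstacle is the last one: a priori one controls $|\sec|$ on $\Ma$ and on $N$ but not its covariant derivatives, so $\nabla^{\Ma}\Fa$ is not directly accessible. What rescues the argument is the observation that $\la\Fa$ equals, up to the bounded unit field $e_0$, the O'Neill integrability tensor $\Aa$, whose \emph{horizontal} covariant derivative is algebraically tied to the undifferentiated curvature tensor of $\Ma$ and to quadratic terms in $\Aa,\Ta$; together with the fact that $\nabla^{N}\cFa$ only ever differentiates in directions lifting to the horizontal distribution — so the collapsing fibre direction never enters the first-derivative estimate — this closes the bound. One should still double-check the precise form and sign of the O'Neill structure equation, and confirm that $\Aa$, $\Ta$ and $\nabla^{\Ma}\Aa$ are genuinely smooth on $\Ma$ (they are, being built from the smooth metric $\ga$ and the smooth horizontal/vertical splitting), so that the identity holds pointwise everywhere despite the metric of $N$ being only $C^{1,\alpha}$.
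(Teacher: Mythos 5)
Your proposal is correct and follows essentially the same route as the paper: the $C^{0}$-bound comes from $|\Aa|\le C_A$ via Lemma \ref{Tensors}, the derivative of $\cFa$ is reduced (after discarding the vanishing correction terms coming from the vertical components and from $\nabla e_0$) to $\langle(\nabla_{e_i}\Aa)(e_j,e_k),e_0\rangle$ on horizontal lifts, and this is bounded by O'Neill's structure equation for $\nabla A$ in terms of $R^{\Ma}$, $\Aa$ and $\Ta$, yielding the constant $C_R+3C_AC_T$. The only cosmetic difference is that the paper works with a frame parallel at a point rather than with your general basic-form pullback identity.
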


\begin{proof}
By Lemma \ref{Tensors} and Corollary \ref{SubmersionBounds} it is enough to show that $\Vert \nabla \cFa \Vert_{C^0}$ is uniformly bounded. For this purpose, let $(\xi_1, \ldots , \xi_n)$ be a local orthonormal frame, parallel in $p \in N$. Denote by $(e_1, \ldots, e_n)$ the horizontal lift of this orthonormal frame and by $e_0 \coloneqq \frac{K}{l}$ the vertical unit vector. We rewrite the pointwise norm at $p$ as follows:
\begin{align*}
\vert \nabla \cFa \vert^2 &= \frac{1}{2} \sum_{i,j,k} \vert (\nabla_{\xi_i} \cFa )(\xi_j, \xi_k ) \vert^2\\
&= \frac{1}{2} \sum_{i,j,k} \vert \xi_i \big( \cFa(\xi_j, \xi_k) \big)\vert^2 \\
&= 2 \sum_{i,j,k >0} \vert e_i \big( \langle \Aa (e_j, e_k) , e_0 \rangle \big) \vert^2 \\
&= 2 \sum_{i,j,k >0} \vert \langle \nabla_{e_i} (\Aa(e_j, e_k)), e_0 \rangle + \langle \Aa(e_j, e_k), \Gamma_{i0}^0 e_0 \rangle \vert^2 \\
&= 2 \sum_{i,j,k >0} \vert \langle (\nabla_{e_i} \Aa)(e_j, e_k) , e_0 \rangle + \langle \Aa( (\nabla_{e_i} e_j)^{\mathcal{H}},e_k), e_0  \rangle \\
&\qquad \qquad \ \ + \langle \Aa(e_j, (\nabla_{e_i} e_k )^{\mathcal{H}}), e_0 \rangle \vert^2 \\
&= 2 \sum_{i,j,k >0} \vert \langle (\nabla_{e_i} \Aa)(e_j, e_k) , e_0 \rangle \vert^2
\end{align*}
Here we used that $\Gamma_{i0}^0 = 0$ and that $( \nabla_{e_i} e_j )^{\mathcal{H}} = \widetilde{\nabla_{\xi_i} \xi_j} = 0$ for all $i,j \neq 0$. By O'Neill's formula \cite[9.28 e)]{Besse},
\begin{align*}
2& \sum_{i,j,k >0} \vert \langle (\nabla_{e_i} \Aa)(e_j, e_k) , e_0 \rangle \vert^2\\
&\quad  = 2 \sum_{i,j,k >0} \Bigg( \vert \langle \Ra(e_k, e_j)e_i, e_0 \rangle - \langle \Aa(e_j, e_k), \Ta(e_0,e_i) \rangle \\
&\quad \qquad \qquad \quad \  + \langle \Aa (e_k, e_i), \Ta (e_0, e_j) \rangle + \langle \Aa (e_i, e_j), \Ta (e_0, e_k) \rangle \vert^2 \Bigg) \\
&\quad \leq 2 n^3 (C_R + 3C_A C_T)^2
\end{align*}
Here $C_R$ is a positive constant such that $\vert \Ra \vert \leq C_R$ following from the assumption that $\vert \sec(\Ma) \vert \leq 1$. 
\end{proof}

Together with Theorem \ref{invariantMetric} and the compact embedding $C^1 \hookrightarrow C^{0,\alpha}$ this lemma implies

\begin{cor}\label{ConvergenceATensor}
Let $(\Ma, \ga)_{a \in \bbN}$ be a collapsing sequence of orientable manifolds in $\mfdspaceCnn$ converging to an orientable Riemannian orbifold $(N,h)$ as in Lemma \ref{COneBoundedATensor}.  Then there is a subsequence of $(\Ma, \ga)$ such that for $(\la \Fa)_{a \in \bbN}= (f_a^{\ast}(\cFa))_{t \in \bbN}$  the sequence $(\cFa)_{a \in \bbN}$ on $N$ converges in $C^{0, \alpha}(N)$ for any $\alpha \in [0,1)$.
\end{cor}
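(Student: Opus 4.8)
The plan is a direct application of the Arzelà--Ascoli theorem, in the guise of the compact embedding $C^1(N) \hookrightarrow C^{0,\alpha}(N)$ for $\alpha \in [0,1)$; the only genuine work is bookkeeping about the orbifold structure of $N$ and about the varying submersion metrics $\ha$. First I would recall that, for $a$ large, Theorem \ref{invariantMetric} makes each $\Ma$ an $\bbS^1$-principal orbifold bundle $\fa : \Ma \to N$ over the fixed limit orbifold $N$, a Riemannian submersion for an invariant metric on $\Ma$ and an induced metric $\ha$ on $N$ with $\ha \to h$ in $C^1$ as $a \to \infty$. Hence the $C^1$- and $C^{0,\alpha}$-norms with respect to $\ha$ and with respect to the limit metric $h$ are uniformly equivalent for $a$ large, and the $C^1(N)$-bound of Lemma \ref{COneBoundedATensor} provides, after passing to a subsequence, $\Vert \cFa \Vert_{C^1(N,h)} \leq C'$ with $C' = C'(n,d,C)$, where $\cFa$ is the basic $2$-form on $N$ characterized by $\fa^{\ast}\cFa = \la \Fa$.

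Next I would pass to a fixed finite orbifold atlas. Since $N$ is compact, being a Gromov--Hausdorff limit of spaces of diameter $\leq d$, it is covered by finitely many charts $\pi_i : \tilde U_i \to U_i$, $i = 1,\dots,m$, with $\tilde U_i \subset \bbR^n$ precompact and finite structure group $\Gamma_i$ acting by isometries for the lifted $C^{1,\alpha}$-metric. In chart $i$ the pullback $\pi_i^{\ast}\cFa$ is a $\Gamma_i$-invariant $2$-form whose coordinate components are functions on $\tilde U_i$ bounded in $C^1(\overline{\tilde U_i})$ by a constant depending only on $C'$ and the fixed atlas; the classical compact embedding $C^1(\overline{\tilde U_i}) \hookrightarrow C^{0,\alpha}(\overline{\tilde U_i})$ then yields a subsequence along which $\pi_i^{\ast}\cFa$ converges in $C^{0,\alpha}$. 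As there are only finitely many charts, an iterated extraction produces one subsequence along which $\cFa$ converges in $C^{0,\alpha}$ on every chart; the local limits are $\Gamma_i$-invariant and agree on overlaps, so they glue to a $2$-form on $N$ that is the $C^{0,\alpha}(N)$-limit of $\cFa$. The identity $\fa^{\ast}(\cFa) = \la \Fa$ is merely the definition of $\cFa$ and needs no separate argument.

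The main --- and essentially only --- obstacle is the orbifold bookkeeping in the last two steps: one must verify that, relative to the fixed limit atlas, a uniform $C^1$-bound on $\cFa$ as a tensor field on $N$ really does yield uniform $C^1$-bounds on its coordinate components in each chart, and that $\Gamma_i$-invariance passes to the limit so that the local limits patch globally. Both follow from the uniform geometry of $N$ furnished by Theorem \ref{CollapseSpace} together with the fact that $N$ is fixed once and for all before the extraction is carried out.
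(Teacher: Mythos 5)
Your proof is correct and takes essentially the same route as the paper, which deduces the corollary in a single sentence from the uniform $C^1$-bound of Lemma \ref{COneBoundedATensor}, Theorem \ref{invariantMetric}, and the compact embedding $C^1 \hookrightarrow C^{0,\alpha}$. The chart-by-chart Arzel\`a--Ascoli extraction and orbifold bookkeeping you spell out are precisely the details the paper leaves implicit.
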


If $(\Ma, \ga)_{a \in \bbN}$ is a collapsing sequences of orientable manifolds in $\mfdspaceCnn$ converging to a nonorientable Riemannian orbifold $N$ such that for each $a \in \bbN$ there is a Riemannian submersion $\fa: (\Ma, \ga) \rightarrow (N, \ha)$,the vertical distribution $\mathcal{V}_a$ is the pullback of the determinant bundle $\mathcal{K}$ over $N$. Considering the $A$-tensor of the Riemannian submersion $\fa: \Ma \rightarrow N$ as a map $A_a:  \mathcal{H} \times \mathcal{H} \rightarrow \mathcal{V}$ there is a two-form $\cFa$ on $N$ with values in $\mathcal{K}$ such that 
\begin{align*}
\fa^{\ast}\cFa = - 2 A_a,
\end{align*}
compare with Lemma \ref{Tensors}. We observe that the bounds given from Lemma \ref{COneBoundedATensor} also carry over to the case of $N$ being nonorientable and therefore we similarly obtain

\begin{cor}\label{NonorientableConvergenceATensor}
Let $(\Ma, \ga)_{a \in \bbN}$ be a collapsing sequence of orientable manifolds in $\mfdspaceCnn$ converging to a nonorientable Riemannian orbifold $(N,h)$ such that for all $a \in \bbN$ there is a Riemannian submersion $\fa: (\Ma, \ga) \rightarrow (N, \ha)$.  Then there is a subsequence of $(\Ma, \ga)$ such that the two-forms $\cFa \in \Omega^2(N, \mathcal{K})$ satisfying $\fa^{\ast} \cFa = - 2A_a$ converges in $C^1(N)$.
\end{cor}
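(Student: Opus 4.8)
The claim is that the proofs of Lemma~\ref{COneBoundedATensor} and Corollary~\ref{ConvergenceATensor} go through essentially unchanged, the only new feature being that $\cFa$ is now a section of $\Omega^2(N,\mathcal{K})$ rather than an ordinary $\bbR$-valued two-form. Since $N$ is compact, I would cover it by finitely many open sets $U_1,\dots,U_m$ on each of which the flat $\bbZ/2$-bundle $\mathcal{K}$ is trivial; a trivialization over $U_\beta$ amounts to a local choice of orientation of $N$, it turns $\cFa|_{U_\beta}$ into an honest two-form, and, via $\fa^{\ast}\cFa=-2\Aa$, identifies the restriction of $\fa$ over $U_\beta$ with the orientable situation of Lemma~\ref{Tensors}; moreover it makes the covariant derivative on $\Omega^2(N,\mathcal{K})$ coincide with the ordinary covariant derivative, the canonical connection on $\mathcal{K}$ being flat and locally trivial. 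Hence the computation in the proof of Lemma~\ref{COneBoundedATensor} applies verbatim on each $U_\beta$: choosing a $\ha$-orthonormal frame on $N$ parallel at $p$, its horizontal lift, and $e_0=\Ka/\la$, the same vanishings $\Gamma_{i0}^0=0$ and $(\nabla_{e_i}e_j)^{\mathcal{H}}=0$ for $i,j\neq 0$ together with O'Neill's formula \cite[9.28\,e)]{Besse} give
\begin{align*}
|\nabla \cFa|^2(p) = 2\sum_{i,j,k>0}\bigl|\langle(\nabla_{e_i}\Aa)(e_j,e_k),e_0\rangle\bigr|^2 \leq 2n^3\,(C_R+3C_AC_T)^2,
\end{align*}
with $|\Ra|\leq C_R$ by $|\sec(\Ma)|\leq 1$, while the corresponding $C^0$-bound on $\cFa$ is immediate from $\fa^{\ast}\cFa=-2\Aa$ together with Lemma~\ref{Tensors} and Corollary~\ref{SubmersionBounds}. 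As all these constants depend only on $n,d,C$, this yields $\Vert\cFa\Vert_{C^1(N)}\leq C(n,d,C)$ for every $a$.

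With this uniform $C^1$-bound in hand, the corollary follows from it exactly as Corollary~\ref{ConvergenceATensor} follows from Lemma~\ref{COneBoundedATensor}: one applies the compact embedding $C^1(N)\hookrightarrow C^{0,\alpha}(N)$, using the convergence $\ha\to h$ in $C^{1,\alpha}$ from Theorems~\ref{CollapseSpace} and~\ref{invariantMetric} to compare the $\ha$-norms and Levi--Civita connections for different $a$ against a fixed background metric on $N$, and extracts a subsequence of $(\Ma,\ga)$ along which $\cFa$ converges in $C^{0,\alpha}(N)$ for every $\alpha\in[0,1)$.

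An equivalent route is to pass to the orientation double cover $\pi\colon\hat N\to N$, over which $\hat M_a\to\hat N$ is an $\bbS^1$-principal orbifold bundle as recalled before the statement. Since $\pi$ and $\hat M_a\to\Ma$ are local isometries, the tensors $\hat\Aa,\hat\Ta,\hat\Ra$ and the sectional curvature of $\hat N$ obey the same bounds, so Lemma~\ref{COneBoundedATensor} and Corollary~\ref{ConvergenceATensor} apply on $\hat N$ and produce a subsequence with $\hat{\mathcal{F}}_a\to\hat{\mathcal{F}}_\infty$ in $C^{0,\alpha}(\hat N)$. The deck transformation $\sigma$ lifts to $\hat M_a$ covering the inversion of the $\bbS^1$-action (the $\lbrace\pm 1\rbrace$-part of the structure cocycle of $\Ma\to N$ being precisely the orientation cocycle of $N$, i.e.\ $\mathcal{K}$), so $\sigma^{\ast}\hat{\mathcal{F}}_a=-\hat{\mathcal{F}}_a$ for all $a$ and, by continuity, also for the limit; hence the $\hat{\mathcal{F}}_a$ and $\hat{\mathcal{F}}_\infty$ descend to sections of $\Omega^2(N,\mathcal{K})$, and the convergence descends to $C^{0,\alpha}(N)$ because $\pi$ is a Riemannian covering.

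The one point that genuinely has to be checked — rather than quoted from the orientable case — is that twisting by $\mathcal{K}$ introduces no additional zero-order term in the estimate for $\nabla\cFa$; this is immediate since $\mathcal{K}$ is flat, so in a local orientation its connection contributes nothing. Everything else (the finite cover of $N$, the uniformity of the constants in $n,d,C$, the passage $\ha\to h$, and the final Arzel\`a--Ascoli argument) is exactly the bookkeeping already performed for Lemma~\ref{COneBoundedATensor} and Corollary~\ref{ConvergenceATensor}.
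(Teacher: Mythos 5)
Your proposal is correct and follows essentially the same route as the paper, which at this point merely observes that the bounds of Lemma~\ref{COneBoundedATensor} carry over to the twisted setting and then extracts a subsequence as in Corollary~\ref{ConvergenceATensor}; your local-trivialization argument (using that $\mathcal{K}$ is flat, so the twist contributes nothing to the derivative estimate) is precisely the justification needed, and the orientation-double-cover route is an equivalent packaging of the same idea. One caveat worth recording: what your argument (and the paper's) actually yields is convergence of a subsequence in $C^{0,\alpha}(N)$ for $\alpha \in [0,1)$, exactly as in Corollary~\ref{ConvergenceATensor} --- a uniform $C^1$ bound gives precompactness only in $C^{0,\alpha}$, not in $C^1$ --- so the ``$C^1(N)$'' in the statement of the corollary should presumably read ``$C^{0,\alpha}(N)$''; this is a defect of the statement rather than of your proof.
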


\section{Spin Structures on $\bbS^1$-bundles}

In the case of a collapsing sequence of spin manifolds $(\Ma, \ga)_{a \in \bbN}$ in $\mfdspaceCnn$ with limit space $N$ we have to deal with $\bbS^1$-orbifold bundles $\Ma \rightarrow N$. Similar to \cite{Ammann} and \cite{MoroianuDiss} we need to distinguish between two types of spin structures on the manifolds $\Ma$: The projectable and the nonprojectable spin structures. 

\begin{defi}
Let $M \rightarrow N$ be an $\bbS^1$-orbifold bundle with $M$ being spin. Then the spin structure of $M$ is called \textit{projectable}, if every local $\bbS^1$-action lifts to the topological spin structure.
\end{defi}

Projectable spin structures and projectable spinors where studied for $G$-principal bundles with compact Lie group $G$ in \cite{MoroianuDiss}. As in the general case $\bbS^1$ does not act by isometries,  we replaced the spin structure by the larger so-called topological spin structure $\phi : P_{\widetilde{\GL}_+}M \rightarrow P_{Gl_+}M$, in the above definition. Here $P_{\GL_+}M$ is the $\GL(n)$-principal bundle consisting of all oriented frames and $P_{\widetilde{\GL}_+}M$ is a double cover of $P_{\GL_+}M$ which is compatible with the group double cover $\widetilde{\GL}_+(n) \rightarrow \GL_+(n)$.
Nevertheless, we have to verify this definition in the setting of $\bbS^1$-orbifold bundles. Therefore, we first generalize it to the case $\bbS^1$-principal orbifold bundles and then to the case of $N$ being nonorientable.

The first definition of spin orbifolds, to the author's knowledge, appeared in \cite{Dong}.
\begin{defi}\label{spinOrbifold}
An oriented Riemannian orbifold $(N,h)$ is spin if there exists a two-sheeted covering of $P_{\SO}(N)$ such that for any orbifold chart $\left( \tilde{U} \rightarrow \faktor{\tilde{U}}{G_U} \cong U \subset N \right)$ there exists a principal $\Spin(n)$-bundle $P_{\Spin}(\tilde{U})$ on $\tilde{U}$ such that $P_{\Spin}(N)_{\vert U} \rightarrow P_{\SO}(N)_{\vert U}$ is induced by $P_{\Spin}(\tilde{U}) \rightarrow P_{\SO}(\tilde{U})$.
\end{defi}
Thus, the spin structure on an orbifold can be defined as a locally $G_p$-invariant spin structure on the smooth covering around $p \in N$. Here $G_p$ is the stabilizer group of the Riemannian orbifold $(N,h)$ at $p$. This requires a lift of the group $G_p$ of isometries to the spin bundle.
\begin{defi}
A singular point $p \in N$ is said to be \textit{spin} if there is a lift $\widetilde{G}_p$ of $G_p \subset \SO(n)$ which projects isomorphically onto $G_p$ via the canonical projection from $\Spin(n)$ to $\SO(n)$.
\end{defi}
Henceforth, a spin orbifold is an orbifold with a fixed spin structure.

Let $f: M \rightarrow N$ be an $\bbS^1$-principal orbifold bundle. We extend the notion of a projectable spin structure canonical to $\bbS^1$-principal orbifold bundles, i.e.\ the spin structure on $M$ is \textit{projectable} if the $\bbS^1$ action lifts to the (topological) spin structure.

\begin{prop}\label{projectableSpinOrbifold}
Let $f: M \rightarrow N$ be an $\bbS^1$-principal orbifold bundle. If $M$ is a spin orbifold with projectable spin structure there is an induced spin structure on $N$. On the other hand, if $N$ is a spin orbifold it induces a projectable spin structure on $M$.
\end{prop}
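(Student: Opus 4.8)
The plan is to reduce the orbifold statement to the manifold case treated in \cite{MoroianuDiss} by working chart-by-chart and then checking that the local constructions glue compatibly with the orbifold structure. Recall that an $\bbS^1$-principal orbifold bundle $f\colon M \to N$ is locally modeled, over an orbifold chart $(\tilde U \to \tilde U / G_U \cong U \subset N)$, by an honest $\bbS^1$-principal bundle $\tilde M_U \to \tilde U$ together with a lift of the $G_U$-action to $\tilde M_U$ commuting with the $\bbS^1$-action. So the data on $M$ over $U$ is equivalent to a $G_U$-equivariant $\bbS^1$-principal bundle on the smooth cover $\tilde U$.

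First I would treat the forward direction. Assume $M$ is spin with projectable spin structure. Over each orbifold chart, the (topological) spin structure on $M$ restricts to a $G_U$-invariant topological spin structure on $\tilde M_U$, and projectability means the $\bbS^1$-action lifts to it. Now apply the manifold result of \cite{MoroianuDiss}: a projectable spin structure on the total space of an $\bbS^1$-principal bundle $\tilde M_U \to \tilde U$ induces a spin structure on the base $\tilde U$. The point is that this induced spin structure on $\tilde U$ is canonical — it is obtained by taking the quotient of the spin bundle by the lifted $\bbS^1$-action — and hence the residual $G_U$-action (which commutes with $\bbS^1$ and preserves the spin structure upstairs) descends to $\tilde U$'s spin bundle, making it $G_U$-invariant in the sense of Definition \ref{spinOrbifold}. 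On overlaps of charts the transition data for $M$ matches the transition data for these local spin structures because the whole construction is natural; this produces a global spin structure on $N$. One subtlety to record: since $\bbS^1$ need not act by isometries on $M$, one works throughout with $P_{\widetilde{\GL}_+}$ rather than $P_{\Spin}$ until invoking Theorem \ref{invariantMetric} to pass to a nearby metric $\tilde g$ for which the action is isometric, so that the topological spin structure refines to a metric one on $N$.

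For the converse, suppose $N$ is a spin orbifold. Over a chart, pull back the $G_U$-invariant spin structure on $\tilde U$ along $\tilde M_U \to \tilde U$; by the manifold statement in \cite{MoroianuDiss} this yields a spin structure on $\tilde M_U$ to which the $\bbS^1$-action lifts, i.e.\ a projectable one, and the lifted $G_U$-action on the base lifts further to $\tilde M_U$ because the construction is functorial in equivariant data. Gluing over chart overlaps gives a global projectable spin structure on $M$. The main obstacle I expect is precisely the equivariance/gluing bookkeeping at the singular points: one must verify that the lift $\widetilde{G}_p$ appearing in the definition of a spin singular point on $N$ is compatible with the lift of the local $\bbS^1$-action on $M$, i.e.\ that the chosen lifts to $\Spin(n)$ (resp.\ $\widetilde{\GL}_+(n)$) can be made simultaneously — this is where a genuine obstruction (a $\bbZ/2$-class) could in principle appear, and the argument must show it vanishes, using that $G_p$ acts on the fiber direction trivially because the $\bbS^1$- and $G_p$-actions commute. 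Once this local compatibility is established, the global statement follows from the orbifold gluing axioms in \cite[Chapter 4]{Boyer}.
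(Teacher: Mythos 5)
Your proposal follows essentially the same route as the paper's proof: localize to an orbifold chart where $f^{-1}(U) \cong (\tilde{U}\times\bbS^1)/G_U$, observe that projectability together with the orbifold spin condition makes the spin structure on the smooth cover $\tilde{U}\times\bbS^1$ invariant under $G_U\times\bbS^1$, descend by the standard construction of \cite{MoroianuDiss}, and pull back for the converse. The $\bbZ_2$ compatibility issue you flag at singular points is not a genuine obstruction: since $\bbS^1$ is connected, its lift to the double cover is unique and automatically commutes with any lift of $G_U$ covering a commuting action downstairs, which is why the paper passes over this point without comment.
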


\begin{proof}
As all metric spin structures are isomorphic to each other, we can assume without loss of generality, that $f: M \rightarrow N$ is a Riemannian orbifold submersion and $\bbS^1$ acts by isometries. In the following, the proof is a locally equivariant version of the construction given in \cite[Chapter 1]{MoroianuDiss}.

For $p \in N$ we consider a local trivialization $U$ around $p$. Then the local situation looks as follows:
\begin{align*}
 \begin{xy}
 	\xymatrix@R-2em@C-0.5em{
 	\tilde{U} \times \bbS^1 \ar[rd] \ar[dd] &  \\
 	 & \faktor{(\tilde{U} \times \bbS^1)}{G_U} \cong f^{-1}(U) \ar[dd] \\
	\tilde{U} \ar[rd] & \\
	& \faktor{\tilde{U}}{G_U} \cong U	
 	}
 \end{xy}
\end{align*}
Hence, the spin structure on $f^{-1}(U)$ is $G_U$ invariant. In particular, the group $G_U$ of isometries lift to the spin structure on the smooth covering $\tilde{U} \times \bbS^1$. Observe, that as $G_p$ is a subgroup of $U$ it also lifts to the spin structure.

If the spin structure on $M$ is projectable i.e.\ $\bbS^1$-equivariant, the spin structure on $\tilde{U} \times \bbS^1$ is $G_U \times \bbS^1$ invariant. Hence, it follows by a standard construction that the spin structure on $M$ induces a spin structure on $N$, see the commutative diagram above.

On the other hand, if $N$ is a spin orbifold it follows that the spin structure on $M$ induced by the pull back of the spin structure on $N$ has to be $\bbS^1$-equivariant, i.e.\ projectable.
\end{proof}

Next we need to extend the notion of a projectable spin structure to the case of $\bbS^1$-orbifold bundles $f:M  \rightarrow N$ where $M$ is spin and $N$ is nonorientable. As $N$ is nonorientable it does admit an orthonormal frame bundle $P_{\mathrm{O}} N$ but not an oriented orthonormal frame bundle. Therefore, we consider pin structures which generalizes spin structures. In the following we roughly sketch the definitions and properties of pin structures. For further details we refer to \cite{Trautman} and \cite[Appendix A.1]{GilkeyPin}

There are two inequivalent double coverings of $\mathrm{O}(n)$ by the groups $\Pin^{\pm}(n)$ which coincide on their preimage of $\SO(n)$.

\begin{defi}
A manifold $(M,g)$ is $\text{pin}^{\pm}$ if it admits a $\text{pin}^{\pm}$  structure, i.e.\ there is a $\Pin^{\pm}(n)$-principal bundle $P_{\Pin^{\pm}(n)} M$ such that it is a double covering of the orthonormal frame bundle $P_{\mathrm{O}} M$ compatible with the double covering $\Pin^{\pm}(n) \rightarrow \mathrm{O}(n)$.
\end{defi}

While a $\text{pin}^+$ structure is equivalent to a spin structure there are nonorientable manifolds carrying a \pin - structure, e.g.\ $\bbR P^2$. $\text{Pin}^{\pm}$-structures on orbifolds are similarly defined as spin orbifolds, see Definition \ref{spinOrbifold}.

Using this definition we derive
\begin{prop}
Let $f:M \rightarrow N$ be an $\bbS^1$-orbifold bundle where $N$ is an unorientable Riemannian orbifold. Then any projectable spin structure on $M$ induces a \pin structure on $N$. In contrast, if $N$ is \pin and $M$ orientable, then there is an induced spin structure on $M$.
\end{prop}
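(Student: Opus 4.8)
The plan is to reduce to the orientable situation of Proposition~\ref{projectableSpinOrbifold} by passing to the orientation double cover of $N$. Let $\pi\colon\hat N\to N$ be the orientation double cover, an orientable Riemannian orbifold carrying the lifted $C^{1,\alpha}$-metric, with nontrivial deck transformation $\sigma$, and form the fiber product $\hat M\coloneqq M\times_N\hat N$. Since the structure group of $f$ lies in $\Aff(\bbS^1)\cong\bbS^1\rtimes\{-1,1\}$, its restriction over the orientable base $\hat N$ reduces to $\bbS^1$, so $\hat f\colon\hat M\to\hat N$ is an $\bbS^1$-principal orbifold bundle, exactly as noted after Theorem~\ref{invariantMetric}. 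The second projection $\hat\pi\colon\hat M\to M$ is a two-sheeted orbifold covering; its deck transformation $\hat\sigma$ covers $\sigma$, and because passing from one sheet of $\hat N$ to the other realizes an orientation-reversing path downstairs, $\hat\sigma$ intertwines the genuine $\bbS^1$-action on $\hat M$ with its inverse, i.e.\ it reverses the vertical direction while $\sigma$ reverses the horizontal orientation; hence $\hat\sigma$ is orientation preserving on $\hat M$. This is consistent with $M=\hat M/\hat\sigma$ being orientable, which is automatic in the first case since $M$ is spin, and hypothesised in the second, and we fix the orientation of $\hat M$ so that $\hat\pi$ becomes orientation preserving. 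As in the proof of Proposition~\ref{projectableSpinOrbifold}, all constructions below are run locally-equivariantly over orbifold charts, so the orbifold points cause no separate difficulty.

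For the first assertion, start from a projectable spin structure on $M$, i.e.\ a topological spin structure $P_{\widetilde{\GL}_+}M\to P_{\GL_+}M$ to which the local $\bbS^1$-action lifts. Pulling it back along $\hat\pi$ (which identifies $P_{\GL_+}\hat M$ with $\hat\pi^{\ast}P_{\GL_+}M$), together with the pulled-back lift of the $\bbS^1$-action, yields a projectable spin structure on $\hat M$, tautologically $\hat\sigma$-invariant since it is a $\hat\pi$-pullback. By Proposition~\ref{projectableSpinOrbifold} it descends to a spin structure $\Theta$ on $\hat N$, and the $\hat\sigma$-invariance descends to a lift $\bar\sigma$ of $\sigma$ acting on $\Theta$. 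As $\sigma$ is orientation reversing, $(\Theta,\bar\sigma)$ is precisely the data of a $\text{pin}^{\epsilon}$-structure on $N$ for one sign $\epsilon\in\{+,-\}$, under the standard description of pin-structures on a nonorientable orbifold via its orientation cover: pulling $P_{\mathrm O}N$ back to $P_{\mathrm O}\hat N$, whose two components are $P_{\SO}\hat N$ and its complement, the structure $\Theta$ lives over the former while $\bar\sigma$ interchanges the two. The remaining point, where I expect the real work to lie, is to check $\epsilon=-$. One computes the action of $\hat\sigma$ on an oriented frame of $M$ adapted to the splitting $TM=\mathcal V\oplus\mathcal H$, namely a vertical reflection composed with an orientation-reversing, hence Clifford-odd, horizontal transformation; following this through the spin double cover and then through the $\bbS^1$-quotient of Proposition~\ref{projectableSpinOrbifold} shows that the induced lift $\bar\sigma$ of $\sigma$ on $\Theta$ satisfies the squaring relation selecting the $\Pin^-$- rather than the $\Pin^+$-normalization. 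The same conclusion is recorded in the mod-$2$ identity $w_2(TM)=f^{\ast}\bigl(w_2(TN)+w_1(TN)^2\bigr)$, which holds because $M$ orientable forces $\mathcal V\cong f^{\ast}\mathcal K$ for the determinant line bundle $\mathcal K$ of $N$ and hence $TM\cong f^{\ast}(\mathcal K\oplus TN)$: the left-hand side vanishes, this being the spin condition for $M$, precisely against the $\text{pin}^-$-obstruction $w_2(TN)+w_1(TN)^2$ of $N$ and not the $\text{pin}^+$-obstruction $w_2(TN)$.

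For the converse, assume $N$ is \pin and $M$ is orientable. Under the same description, a \pin structure on $N$ is a $\sigma$-equivariant spin structure $(\Theta,\bar\sigma)$ on $\hat N$; pulling $\Theta$ back along $\hat f$ and applying the second half of Proposition~\ref{projectableSpinOrbifold} gives a projectable spin structure on $\hat M$, which is $\hat\sigma$-equivariant because $\bar\sigma$ pulls back. Quotienting this $\hat\sigma$-equivariant topological spin structure by $\hat\sigma$ then produces a topological spin structure on $M$ --- the quotient covers $P_{\GL_+}M$ rather than the full frame bundle precisely because $M$ is orientable --- and it is projectable since the lift of the $\bbS^1$-action on $\hat M$ was $\hat\sigma$-invariant. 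Equivalently one may argue directly with characteristic classes: $TM\cong f^{\ast}(\mathcal K\oplus TN)$ gives $w_1(TM)=0$ and $w_2(TM)=f^{\ast}\bigl(w_1(TN)^2+w_2(TN)\bigr)=0$ because $N$ is \pin, so $M$ is spin, and the spin structure pulled back through $\hat M$ from the equivariant one on $\hat N$ is the desired induced one.

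The main obstacle is thus the sign bookkeeping separating $\text{pin}^+$ from $\text{pin}^-$: propagating the lift $\bar\sigma$ of the orientation-reversing deck transformation through the Clifford algebra while keeping track of the vertical reflection and of $w_1(\mathcal V)=f^{\ast}w_1(N)$, all done equivariantly over orbifold charts. Everything else is a faithful transcription of Proposition~\ref{projectableSpinOrbifold} to the orientation double cover.
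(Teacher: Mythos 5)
Your argument is correct in outline but follows a genuinely different route from the paper. You pass to the orientation double cover $\hat N$, pull the bundle back to an $\bbS^1$-principal orbifold bundle $\hat M\to\hat N$, apply Proposition \ref{projectableSpinOrbifold} there, and then descend equivariantly under the deck transformation, isolating the $\Pin^+$ versus $\Pin^-$ sign as a separate computation with the lift $\bar\sigma$. The paper instead stays on $N$ and $M$: it identifies $f^{\ast}P_{\mathrm O}N$ with a subbundle of $P_{\SO}M$ via the embedding $\iota\colon \mathrm{O}(n)\hookrightarrow\SO(n+1)$, $A\mapsto\mathrm{diag}(\det A,A)$, takes its preimage under $P_{\Spin}M\to P_{\SO}M$, and descends by projectability exactly as in Proposition \ref{projectableSpinOrbifold}. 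The payoff of the paper's route is that the sign bookkeeping you rightly identify as the crux is absorbed into a single group-theoretic fact, namely that $\iota$ lifts to $\tilde\iota\colon\Pin^-(n)\hookrightarrow\Spin(n+1)$ (a reflection $r_v$ maps to the rotation by $\pi$ in the $(e_0,v)$-plane, whose lift $e_0v$ squares to $-1$, matching the $\Pin^-$ normalization); the same fact also settles, in your converse direction, why the quotient of the equivariant spin structure on $\hat M$ by the lifted $\hat\sigma$ is a genuine double cover of $P_{\SO}M$ rather than collapsing (i.e.\ why the relevant lift squares to $+1$), a point you pass over. Your route buys a reduction to the already-proved orientable case and makes the role of the orientation character transparent, at the cost of having to carry the equivariance and the square of $\bar\sigma$ through two descents. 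One caveat on your side remark: the identity $w_2(TM)=f^{\ast}\bigl(w_2(TN)+w_1(TN)^2\bigr)$ is correct, but $f^{\ast}$ need not be injective on $H^2(\,\cdot\,;\bbZ_2)$ for an $\bbS^1$-bundle (the Gysin sequence kills the mod $2$ Euler class), so the vanishing of the left-hand side does not by itself force the $\text{pin}^-$-obstruction of $N$ to vanish; this is harmless here since you only use it as corroboration, but it should not be promoted to a proof of the sign.
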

\begin{proof}
The proof is similar to the proof of Proposition \ref{projectableSpinOrbifold} by identifying $f^{\ast} P_{\mathrm{O}} N$ with a subbundle of $P_{\SO}M$ via the embedding
\begin{align*}
\mathrm{O}(n) &\hookrightarrow \SO(n+1) \\
A & \mapsto \begin{pmatrix}
\det(A) & 0 \\
0 & A
\end{pmatrix}.
\end{align*}
\end{proof}


Now let $f: M \rightarrow N$ be an $\bbS^1$-principal orbifold bundle such that the spin structure on $M$ is nonprojectable. As before, we assume without loss of generality that $\bbS^1$ acts by isometries. In particular, $f$ is a Riemannian submersion.

As the spin structure of $M$ is nonprojectable the $\bbS^1$-action does not lift to $P_{\Spin}(M)$. Nevertheless, the double-cover of $\bbS^1$ acts on $P_{\Spin}(M)$. 

A nonprojectable spin structure on $N$ does not imply that $N$ is not spin. If $N$ is spin, there exists a group homomorphism $\psi: \pi_1(M) \rightarrow \bbZ_2$ such that the composition is $\pi_1(\bbS^1) \hookrightarrow \pi_1(M) \rightarrow \bbZ_2$ is surjective. Then we can twist the spin structure on $M$ with $\psi$ to obtain a projectable spin structure, i.e.\ N is spin if and only if  $M \rightarrow N$ has a square root as $\bbS^1$-bundle, cf.\ \cite[Chapter 7.3]{AmmannDiss}.

Even, if we can not determine if $N$ is spin or not, we still have an induced structure on $N$. Here we extend the proof of \cite[Section 4]{Ammann}

\begin{lem}
Let $f:M \rightarrow N$ be an $\bbS^1$-principal orbifold bundle. If $M$ is a spin orbifold with nonprojectable spin structure, there is an induced $\text{spin}^{\bbC}$-structure on $N$. 
\end{lem}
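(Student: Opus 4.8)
The plan is to mimic the classical construction relating a nonprojectable spin structure on the total space of an $\bbS^1$-principal bundle to a $\text{spin}^{\bbC}$-structure on the base, working locally over orbifold charts to handle the singular points. First I would reduce to the case where $\bbS^1$ acts by isometries on $(M,g)$ and $f$ is a Riemannian orbifold submersion, since all metric spin structures are isomorphic. Then over a local trivialization $U \cong \tilde{U}/G_U$ around $p \in N$ we have $f^{-1}(U) \cong (\tilde{U} \times \bbS^1)/G_U$, and the task is to build a $G_U$-equivariant $\text{spin}^{\bbC}$-structure on $\tilde{U}$ that patches correctly.

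Next I would carry out the local construction. Since the spin structure on $M$ is nonprojectable, the $\bbS^1$-action does not lift to $P_{\Spin}(M)$, but its connected double cover $\bbS^1 \to \bbS^1$, $z \mapsto z^2$, does act (this is essentially the defining obstruction: the lift exists exactly on the square-root bundle). Pick a principal connection on $M \to N$; using the horizontal distribution one identifies $f^* P_{\SO}(N)$ with a sub-$\SO(n)$-bundle of $P_{\SO}(M)_{|\text{horizontal}}$, and $P_{\Spin}(M)$ restricted appropriately gives a $\Spin(n)$-bundle on which the double cover of $\bbS^1$ acts. Quotienting by this $\bbS^1$-action does not yield a $\Spin(n)$-bundle over $N$ because the action has a $\bbZ_2$ that acts nontrivially on the spin fibers; instead, forming the associated bundle $P_{\Spin}(M) \times_{\bbZ_2} \bbS^1$ — i.e.\ tensoring with the line bundle $L$ associated to the $\bbS^1$-bundle $M \to N$ via the square-root representation, where $L^{\otimes 2}$ is the complex line bundle of $M \to N$ — produces a $\Spin^{\bbC}(n) = (\Spin(n) \times \bbS^1)/\bbZ_2$-principal bundle over $N$. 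I would verify that the diagonal $\bbZ_2$ identifications match, so the resulting bundle is a genuine $\text{spin}^{\bbC}$-structure over $U$, and that the $G_p$-action lifts (using that $p$ is a spin singular point so $G_p$ lifts to $\Spin(n)$, hence to $\Spin^{\bbC}(n)$).

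Finally I would check that these local $\text{spin}^{\bbC}$-structures glue: over overlaps the identifications are governed by the transition functions of $M \to N$ together with the fixed spin structure on $M$, so the compatibility is automatic, and the auxiliary line bundle is globally $L$ with $L^{\otimes 2} \cong M \times_{\bbS^1} \bbC$. This yields the induced $\text{spin}^{\bbC}$-structure on $N$.

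The main obstacle I expect is the orbifold bookkeeping at singular points: one must ensure the $G_U$-equivariance of the local $\text{spin}^{\bbC}$-structure is compatible with the $\bbZ_2$-quotient defining $\Spin^{\bbC}(n)$, i.e.\ that the lift of $G_U$ (and in particular $G_p$) to $\Spin(n) \times \bbS^1$ descends correctly. This is where the hypothesis that $N$ (equivalently each singular point) is spin-admissible in the orbifold sense is used, and it is the step that genuinely goes beyond the smooth case treated in \cite[Section 4]{Ammann}; everything else is a routine equivariant adaptation of the classical total-space argument.
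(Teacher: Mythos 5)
Your proposal is correct and follows essentially the same route as the paper: reduce the frame bundle of $M$ to frames adapted to the vertical/horizontal splitting, observe that the nonprojectability means only the double cover of $\bbS^1$ acts on the $\Spin(n)$-bundle $P$ upstairs, and combine this with the $\Spin(n)$-action to get a free $\Spin^{\bbC}(n)=\Spin(n)\times_{\bbZ_2}\bbS^1$-action exhibiting $P$ as a $\text{spin}^{\bbC}$-structure on $N$. The only caveat is that the equivariance at singular points comes from the hypothesis that $M$ is a spin \emph{orbifold} (so the local groups $G_U$ already lift to its spin structure), not from an assumption on $N$; otherwise your orbifold bookkeeping fills in exactly what the paper's terse proof leaves implicit.
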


\begin{proof}
Let $P_{\SO(n)}M$ be the $\SO(n)$-principal bundle over $M$ consisting of all positive oriented orthonormal frames whose first vector is vertical. Its preimage defines a principal $\Spin(n)$-bundle $P$. Recall, that not the $\bbS^1$-action itself but its double cover acts on $P$. This group operation together with the $\Spin(n)$-action on $P$ induces a free $\Spin^{\bbC}(n) \coloneqq \left( \Spin(n) \times_{\bbZ_2} \bbS^1 \right)$-action on $P$. Thus, $P$ defines a $\text{spin}^{\bbC}$-structure on $N$. 
\end{proof}

Conversely, if we have a fixed $\bbS^1$-principal orbifold bundle $f: M \rightarrow N$ such that $N$ has a $\text{spin}^{\bbC}$-structure, then it does not follow that $M$ is a spin manifold.

\begin{ex}
Let $M \coloneqq \bbC P^2 \times \bbS^1$ be the trivial $\bbS^1$-bundle over the complex projective space $\bbC P^2$. It is known that $\bbC P^2$ is $\text{spin}^{\bbC}$ but not spin. Thus, $M$ does not admit any spin structure.
\end{ex}

\begin{rem}
If $f:M \rightarrow N$ is an $\bbS^1$-orbifold bundle such that the spin structure on $M$ is nonprojectable and $N$ is nonorientable then this does not induce a $\text{pin}^{\bbC}$-structure on $N$, because $f:M \rightarrow N$ is not an $\bbS^1$-principal orbifold bundle, compare \cite[p.\ 312]{GilkeyPin}.
\end{rem}

\section{Induced Operators}

Consider a sequence $(\Ma, \ga)_{a \in \bbN}$ in $\mfdspaceCnn$ collapsing to a Riemmannian orbifold $(N,h)$. We assume that for each $a$ the manifold $\Ma$ is spin and that the metric $\ga$ is invariant, see Theorem \ref{invariantMetric}. Furthermore, for each $a \in \bbN$ let $\Za$ be an element of $\Hom(\Sigma \Ma , \Sigma \Ma)$, where $\Sigma \Ma$ is the spin bundle of $(\Ma, \ga)$.

The goal of this section is to study the behavior of the sequence $(\Za)_{a \in \bbN}$. We show that under appropriate condition this sequence converge to a well-defined operator $\mathcal{Z} \in \Hom (\Sigma N , \Sigma N)$ if $N$ is orientable and a well-defined operator $\mathcal{Z} \in \Hom (\Sigma^p N \otimes \mathcal{K}^{\bbC} , \Sigma^p N \otimes \mathcal{K}^{\bbC})$ if $N$ is nonorientable. Here $\Sigma^p N \otimes \mathcal{K}^{\bbC}$ is the \pin  bundle on $N$ twisted with the complexified determinant bundle $ \mathcal{K}^{\bbC}$.

To simplify notation we define
\begin{align*}
\opspace \coloneqq  \left\lbrace (M,g,Z): \ \begin{matrix}
(M,g) \in \mfdspaceCnn \ \text{and spin} \\
Z \in \Hom(\Sigma M , \Sigma M)
\end{matrix} \right\rbrace
\end{align*}

Collapsing $\bbS^1$-principal bundles of spin manifolds were discussed, under slightly different assumptions, in \cite{Ammann}. We adapt his setting to our situation.

First we consider an $\bbS^1$-principal orbifold bundle $f:(M,g) \rightarrow (N,h)$ where $f$ is a Riemannian submersion. Recall that $K$ is the Killing field on $M$ induced by the $\bbS^1$-action. If the spin structure on $M$ is projectable, then the $\bbS^1$-action lifts to an isometric action $\kappa: \bbS^1 \times \Sigma M \rightarrow \Sigma M$. As a shortcut, we denote with  $\kappa_t$ the action of the element $e^{i2\pi t} \in \bbS^1$ wherever it is considered. If the spin structure in nonprojectable, then the double cover of $\bbS^1$ acts on $\Sigma M$. We denote this action also with $\kappa$. We define the Lie-derivative of a spinor $\varphi$ in the direction of $K$ as follows:
\begin{align*}
\mathcal{L}_{K}(\varphi) (x) \coloneqq \left. \frac{\mathrm{d}}{\mathrm{d}s} \right\vert_{s= 0} \kappa_{-s}(\varphi (\kappa_s (x) ) ).
\end{align*}
By construction, $\mathcal{L}_K$ is the differential of the $\bbS^1$-action on $L^2(\Sigma M)$. Thus, it has the eigenvalues $ik$ where $k \in \bbZ$ if the spin structure on $M$ is projectable and $k \in (\bbZ + \frac{1}{2})$ if the spin structure on $M$ is nonprojectable. Denote with $V_k$ the eigenspace of $\mathcal{L}_K$ to the eigenvalue $ik$.  Hence, $L^2(\Sigma M)$ decomposes as
\begin{align*}
L^2(\Sigma M) = \bigoplus_{k} V_k.
\end{align*}

\begin{rem}
As $\kappa$ acts on $\Sigma M$ by isometries, it commutes with the Dirac Operator $D^M$. Therefore, $\mathcal{L}_K$ and $D^M$ are simultaneously diagonalizable, i.e.\ for any eigenspinor $\varphi$ of $D^M$ there is a $k \in \bbZ$ (resp.\ $k \in (\bbZ + \frac{1}{2})$ ) such that $\varphi \in V_k$.
\end{rem}

\begin{lem}[ \cite{Ammann}]\label{verticalRelation}
For any $k \in \bbZ$, resp.\ $k \in (\bbZ + \frac{1}{2})$, and any spinor $\varphi \in V_k$,
\begin{align*}
\nabla_{K} \varphi - \mathcal{L}_K = \frac{l^2}{4} \gamma(F) \varphi - \frac{1}{2} \gamma(K) \gamma \left(\frac{\grad(l)}{l} \right) \varphi 
\end{align*}
Here, the Clifford multiplication with a two-form is defined as
\begin{align*}
\gamma(F) \varphi \coloneqq \sum_{i \leq j} F(e_i, e_j) \gamma(e_i) \gamma(e_j) \varphi.
\end{align*}
\end{lem}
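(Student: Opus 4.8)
The plan is to compute $\nabla_K \varphi$ directly in terms of a local frame adapted to the $\bbS^1$-principal orbifold bundle, using the spinorial covariant derivative formula and the O'Neill tensor identities from Lemma \ref{Tensors}. First I would fix a point $x \in M$ and a local orthonormal frame $(e_0, e_1, \dots, e_n)$ with $e_0 = K/l$ vertical and $e_1, \dots, e_n$ horizontal lifts of a frame on $N$, chosen so that the horizontal vectors are synchronous (parallel along the fiber). The Levi-Civita connection on $M$ acts on spinors by $\nabla_X \varphi = X(\varphi) + \frac{1}{4}\sum_{i<j}\langle \nabla_X e_i, e_j\rangle \, \gamma(e_i)\gamma(e_j)\varphi$, so I would apply this with $X = K = l\,e_0$. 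The term $l\,e_0(\varphi)$ is, by definition of the Lie derivative $\mathcal{L}_K$, related to $\mathcal{L}_K\varphi$ plus the infinitesimal action of the lift $\kappa$; since $\varphi \in V_k$ one gets $e_0(\varphi) = \frac{1}{l}\mathcal{L}_K\varphi$ up to the connection-coefficient correction coming from how $\kappa$ differs from parallel transport along the fiber. So the heart of the matter is to identify $\nabla_K - \mathcal{L}_K$ with the algebraic (Clifford) expression built from the connection coefficients $\langle \nabla_K e_a, e_b\rangle$.

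The next step is to evaluate those connection coefficients. Using $\nabla_K e_a = \nabla_{e_0}(l\,e_0)$-type expressions and the standard splitting of $\nabla$ into its $T$- and $A$-tensor parts for a Riemannian submersion (O'Neill), I would read off: $\langle \nabla_{e_0} e_0, e_i\rangle$ is governed by $T(e_0,e_0) = -\frac{1}{l}\grad(l)$, giving the $-\frac{1}{2}\gamma(K)\gamma(\grad(l)/l)$ term after multiplying by $l = |K|$ and assembling the Clifford product $\gamma(e_0)\gamma(e_i)$; and $\langle \nabla_{e_0} e_i, e_j\rangle$ for $i,j \geq 1$ is governed by the $A$-tensor via $A(e_i,e_j) = -\frac{l}{2}F(e_i,e_j)e_0$ together with $A(e_i,e_0) = \frac{l}{2}\sum_j F(e_i,e_j)e_j$, which after the factor $l$ from $K = l e_0$ and reorganizing the double sum produces exactly $\frac{l^2}{4}\gamma(F)\varphi$ with $\gamma(F) = \sum_{i<j}F(e_i,e_j)\gamma(e_i)\gamma(e_j)$. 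One must also check the mixed coefficients $\langle \nabla_{e_0} e_i, e_0\rangle = -\langle \nabla_{e_0}e_0, e_i\rangle$ do not double-count, and that terms involving $e_i(l)$ cancel appropriately; the synchronous choice of horizontal frame kills the extraneous $\widetilde{\nabla}$-pieces as in the proof of Lemma \ref{COneBoundedATensor}.

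I expect the main obstacle to be bookkeeping rather than conceptual: carefully tracking the factor of $l$ introduced by writing $K = l\,e_0$, keeping the antisymmetrization conventions for $\gamma(F)$ consistent (the sum $\sum_{i<j}$ versus $\frac{1}{2}\sum_{i,j}$), and—most delicately—justifying that $K(\varphi) - \mathcal{L}_K\varphi$ contributes \emph{only} through the lift of the connection and produces no further zeroth-order term, which uses that $\kappa$ is the lift of an isometric (for the invariant metric) $\bbS^1$-action to the spin bundle and hence intertwines parallel transport along fibers with the spin connection up to precisely the holonomy captured by the $A$- and $T$-tensors. Once the two families of connection coefficients are in hand, the identity assembles by collecting the vertical-horizontal part (the $\grad(l)$ term) and the horizontal-horizontal part (the curvature term). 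Since the statement is quoted from \cite{Ammann}, the orbifold setting only requires noting that all computations are local on the smooth covering $\tilde U \times \bbS^1$ and $G_U$-equivariant, so they descend unchanged.
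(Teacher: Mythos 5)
The paper does not prove this lemma at all: it is imported verbatim from \cite{Ammann}, so there is no in-paper argument to compare against. Your reconstruction is essentially the standard proof and would work. Two remarks. First, the cleanest packaging of your computation is the Kosmann--Lichnerowicz formula for the spinorial Lie derivative along a Killing field, $\mathcal{L}_K\varphi = \nabla_K\varphi - \tfrac{1}{4}\gamma(dK^{\flat})\varphi$; since $K^{\flat} = l^2\omega$ and $K(l)=0$, one gets $dK^{\flat} = l^2 F + 2l\,dl\wedge\omega$, and Clifford multiplication by these two pieces yields exactly $\tfrac{l^2}{4}\gamma(F)$ and $-\tfrac12\gamma(K)\gamma(\grad(l)/l)$. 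Your frame computation is this formula unwound, and it identifies the two families of connection coefficients correctly.

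Second, a few points in your plan need tightening. (i) The spin connection formula is $\nabla_X\varphi = X(\varphi) + \tfrac12\sum_{a<b}\langle\nabla_X e_a,e_b\rangle\gamma(e_a)\gamma(e_b)\varphi$ (equivalently $\tfrac14\sum_{a,b}$); you wrote $\tfrac14\sum_{i<j}$, which is off by a factor of $2$ and would spoil the constants you are trying to reproduce. (ii) The condition you need on the frame is not that the $e_i$ be parallel along the fiber (``synchronous'') but that the frame be invariant under the local $\bbS^1$-action, i.e.\ $e_0 = K/l$ and $e_i$ basic horizontal lifts; then the lifted spinor frame is $\kappa$-invariant and the framewise derivative $K(\varphi)$ \emph{is} $\mathcal{L}_K\varphi$ with no residual zeroth-order correction --- this disposes of the step you flag as delicate. (iii) To convert $\langle\nabla_{e_0}e_i,e_j\rangle$ into $A$-tensor data you should use $[K,e_i]=0$ for basic $e_i$ (invariance of the metric), so that $\nabla_K e_i = \nabla_{e_i}K = e_i(l)e_0 + l\,A(e_i,e_0)$; the O'Neill tensors by themselves only control derivatives in horizontal directions. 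With these corrections the bookkeeping closes and the identity follows.
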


Recall that 
\begin{align*}
\Sigma_{n+1} \simeq 
\begin{cases}
\Sigma_n  &\text{if $n$ is even,}\\
\Sigma_n \oplus \Sigma_n &\text{if $n$ is odd.}
\end{cases}
\end{align*}
Thus, if $n$ is odd, we consider $\nu_n = f^{\ast} \omega_n^{\bbC} = i^{\left[ \frac{n+1}{2} \right]} \gamma(e_1) \ldots \gamma(e_n)$, the pullback of the complex volume form of $\Sigma N$. As the square of $\nu_n$ is the identity the map $\nu_n: \Sigma_{n+1} \rightarrow \Sigma_{n+1}$ has the eigenvalues $\pm 1$. Thus, we obtain the following splitting
\begin{align*}
\Sigma_{n+1} = \Sigma_n^{+} \oplus \Sigma_n^{-}
\end{align*}
into the corresponding eigenspaces.  Then, $i \gamma(e_0): \Sigma_{n}^{\pm} \rightarrow \Sigma_n^{\mp}$ defines an isometry. This action anti commutes with Clifford multiplication of horizontal vector fields.

\begin{rem}
As $n+1$ is even, there is a natural splitting $\Sigma_{n+1} = \Sigma^+_{n+1} \oplus \Sigma_{n+1}^{-}$ into the $\pm 1$-eigenspaces of the complex volume element $\omega^{\bbC}_{n+1} = i^{\left[ \frac{(n+1)+1}{2} \right]} \gamma(e_0) \gamma(e_1) \ldots \gamma(e_n)$.  This is a \textit{different} splitting as $\nu_n$ and $\omega^{\bbC}_{n+1}$ do not commute with each other.
\end{rem}

Set $L \coloneqq M \times_{\bbS^1} \bbC$. Ammann constructed in \cite[Lemma 3.2]{Ammann} for each $k \in \bbZ$ (resp.\ $k \in (\bbZ + \frac{1}{2})$) an isometry
\begin{align*}
Q_k : \begin{cases}
L^2( \Sigma N \otimes L^{-k}) \rightarrow V_k, &\text{if $n$ is even} , \\
L^2( (\Sigma^+N \oplus \Sigma ^- N) \otimes L^{-k}) \rightarrow V_k, &\text{if $n$ is odd}.
\end{cases}
\end{align*}
In the case of nonprojectable spin structures, the tensor product of  the bundles $\Sigma N \otimes L^{-k}$ exist, the separate bundles itself are not necessarily defined globally.

The map $Q_k$ behaves well with Clifford multiplication. For a vector field $X$ on $N$, let $\tilde{X}$ denote its horizontal lift. For any spinor $\phi$,
\begin{align*}
\gamma(\tilde{X}) Q_k(\phi) = 
\begin{cases}
Q_k (\gamma(X) \phi) &\text{if $n$ is even}, \\
Q_k ( \gamma(X) \phi^+ \oplus -\gamma(X) \phi^-) &\text{if $n$ is odd},
\end{cases}
\end{align*}
and for the vertical unit vector field $V$ we have
\begin{align*}
i\gamma(V) Q_k (\phi)= 
\begin{cases}
Q_k ( \omega_n^{\bbC} \phi) &\text{if $n$ is even}, \\
Q_k ( \phi^- \oplus  \phi^+) &\text{if $n$ is odd},
\end{cases}
\end{align*}
where $\omega_n^{\bbC} \coloneqq i^{\left[ \frac{n}{2} \right]} \gamma(\xi^k_1) \ldots \gamma( \xi^k_n)$ is the complex volume element of $\Sigma N \otimes L^{-k}$ which is defined via a local orthonormal frame $(\xi^k_1, \ldots \xi^k_n)$.
\newline

Recall that we also have to consider the situation of $\bbS^1$-orbifold bundles $f:M \rightarrow N$, with $M$ spin and $N$ nonorientable. The canonical representations for the Clifford algebra $\mathbf{C}l (n)$ can be also restricted to $\Pin^-(n)$. We call the associated vector bundle $\Sigma^P N$ the \pin bundle of $N$. Recall the embedding
\begin{align*}
\iota : \mathrm{O}(n) &\hookrightarrow \SO(n+1) \\
A & \mapsto \begin{pmatrix}
\det(A) & 0 \\
0 &A
\end{pmatrix}.
\end{align*}
Let $\tilde{\iota}: \Pin^-(n) \hookrightarrow \Spin(n+1)$ be the lift of this embedding to the double cover. Assuming $n$ to be even, it follows that
\begin{align*}
\Sigma M &= P_{\Spin}M \times_{\rho_{n+1}} \Sigma_{n+1} \\
&\cong ( f^{\ast} P_{\mathrm{O}}N \times_{\tilde{\iota}} \Spin(n+1) ) \times_{\rho_{n+1}} (\Sigma_n \otimes \bbC) \\
&= (f^{\ast} P_{\mathrm{O}}N ) \otimes (\mathcal{K} \otimes_{\bbR} \bbC)\\
&= (f^{\ast} P_{\mathrm{O}}N ) \otimes \mathcal{K}^{\bbC},
\end{align*}
where $\mathcal{K}$ is the determinant bundle of $N$. Similar we obtain for $n$ odd
\begin{align*}
\Sigma M \cong ( \Sigma^{P+} N \oplus \Sigma^{P-}N ) \otimes \mathcal{K}^{\bbC},
\end{align*}
where the splitting is analogous to the spin case.

For our purpose it is enough to consider the case of $M$ carrying a projectable spin structure inducing a \pin - structure on $N$. Let $V_0$ denote the space of $\bbS^1$-invariant subspace of $L^2(\Sigma M)$, i.e.\ those spinors $\phi$ such that $\mathcal{L}_K \phi = 0$ for any local Killing field $K$ induced by the local $\bbS^1$-actions on $M$. Following the lines of \cite[Lemma 3.2]{Ammann} we find an isometry
\begin{align*}
Q^P_0 : \begin{cases}
L^2( \Sigma^P N \otimes \mathcal{K}^{\bbC}) \rightarrow V_0, &\text{if $n$ is even} , \\
L^2( (\Sigma^{P+}N \oplus \Sigma^{P-} N) \otimes \mathcal{K}^{\bbC}) \rightarrow V_0, &\text{if $n$ is odd}.
\end{cases}
\end{align*}
As in the spin case, $Q^P_0$ behaves well with Clifford multiplication. For a vector field $X$ on $N$, its horizontal lift $\tilde{N}$and any spinor $\phi$ we have
\begin{align*}
\gamma(\tilde{X}) Q_0^P(\phi \otimes s) = 
\begin{cases}
Q_0^P \big( (\gamma(X) \phi) \otimes s \big) &\text{if $n$ is even}, \\
Q_0^P \big( ( \gamma(X) \phi^+ \oplus -\gamma(X) \phi^-)\otimes s \big) &\text{if $n$ is odd},
\end{cases}
\end{align*}
and for the vertical unit vector field $V$ we have
\begin{align*}
i\gamma(V) Q_0^P (\phi \otimes s)= 
\begin{cases}
Q_0^P \big( (\omega_n^{\bbC} \phi) \otimes s \big) &\text{if $n$ is even}, \\
Q_0^P \big( ( \phi^- \oplus  \phi^+) \otimes s \big) &\text{if $n$ is odd},
\end{cases}.
\end{align*}

Since we want to consider limit operators acting on the spinors of $N$, it is convenient to assume that the spin structure on $M$ is projectable. To simplify notation we only carry out the case of an $\bbS^1$-principal orbifold bundle $f:M \rightarrow N$. The statements and modifications for the remaining case are obvious. Hence, let $f:M \rightarrow N$ be an $\bbS^1$-principal orbifold bundle such that $M$ has a projectable spin structure. In that case, $0$ is an eigenvalue of $\mathcal{L}_K$ and we, therefore, have the isometry
\begin{align*}
Q_0 : \begin{cases}
L^2( \Sigma N) \rightarrow V_0, &\text{if $n$ is even} , \\
L^2( (\Sigma^+N \oplus \Sigma ^- N)) \rightarrow V_0, &\text{if $n$ is odd}.
\end{cases}
\end{align*}

Let $Z \in \Hom(\Sigma M, \Sigma M)$: By the above discussion, $Z_{\vert V_0}$ can only be identified, via $Q_0$ with an operate $\mathcal{Z}$ on $N$ if $Z(V_0) \subset V_0$ or equivalently \ $\mathcal{L}_K( Z) = 0$. We call such an operator \textit{projectable}.

\begin{defi}
Let $\eta: M \rightarrow N$ be an $\bbS^1$-principal orbifold bundle such that $\bbS^1$ acts by isometries and $M$ has a projectable spin structure. For  $Z \in \text{Hom} (\Sigma M , \Sigma M)$ acting on spinors, we define the \textit{associated invariant operator} as
\begin{align*}
\tilde{Z} (\varphi) \coloneqq \int_0^{1} \kappa_{-t} ( Z ( \kappa_t \varphi) ) dt,
\end{align*}
where $\kappa$ is the induced $\bbS^1$-action on $\Sigma M$.
\end{defi}

\begin{lem}
For any $Z \in \text{Hom} (\Sigma M , \Sigma M)$  the operator $\tilde{Z}$ induces a well-defined operator 
\begin{align*}
\tilde{Z}: V_0 \rightarrow V_0.
\end{align*}
\end{lem}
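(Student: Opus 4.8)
The plan is to read the defining formula for $\tilde{Z}$ as an average over the circle of the conjugated operators $\kappa_{-t}\circ Z\circ\kappa_t$: first I would check that this average is a bounded operator on $L^2(\Sigma M)$, and then I would show that, because we integrate over a \emph{full} period, the resulting operator commutes with the $\bbS^1$-action, which is exactly what forces it to map $V_0$ into $V_0$.

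I would begin by collecting the structural facts. Since the spin structure on $M$ is projectable and, by Theorem \ref{invariantMetric}, we may assume that $\bbS^1$ acts by isometries, the maps $\kappa_t$ assemble into a strongly continuous one-parameter group of unitary operators on $L^2(\Sigma M)$ whose infinitesimal generator is $\mathcal{L}_K$; in the normalization of the excerpt, in which $\kappa_t$ is the action of $e^{i2\pi t}\in\bbS^1$, this group is $1$-periodic, so $\kappa_1=\mathrm{id}$. Consequently $V_0=\ker\mathcal{L}_K$ coincides with the fixed-point set $\{\varphi\in L^2(\Sigma M):\kappa_t\varphi=\varphi \text{ for all } t\}$. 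Moreover, since $M$ is compact, $Z$ extends to a bounded operator on $L^2(\Sigma M)$ with $\Vert Z\Vert_{\mathrm{op}}\le\sup_{x\in M}\vert Z_x\vert$.

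For well-definedness I would observe that, for each $\varphi\in L^2(\Sigma M)$, the map $t\mapsto\kappa_{-t}(Z(\kappa_t\varphi))$ is continuous from $[0,1]$ to $L^2(\Sigma M)$, by strong continuity of $t\mapsto\kappa_t$ together with boundedness of $Z$; hence the Bochner integral defining $\tilde{Z}\varphi$ exists, and the triangle inequality with $\Vert\kappa_t\varphi\Vert_{L^2}=\Vert\varphi\Vert_{L^2}$ gives $\Vert\tilde{Z}\varphi\Vert_{L^2}\le\Vert Z\Vert_{\mathrm{op}}\,\Vert\varphi\Vert_{L^2}$, so $\tilde{Z}$ is a bounded operator on $L^2(\Sigma M)$ (and, evaluating the integrand fibrewise, again an element of $\Hom(\Sigma M,\Sigma M)$). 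The heart of the matter is the equivariance of $\tilde{Z}$: for $s\in\bbR$, the substitution $u=t+s$ yields
\begin{align*}
\kappa_{-s}\,\tilde{Z}\,\kappa_s\varphi
&= \int_0^1 \kappa_{-s-t}\big(Z(\kappa_{t+s}\varphi)\big)\,dt \\
&= \int_s^{1+s}\kappa_{-u}\big(Z(\kappa_u\varphi)\big)\,du ,
\end{align*}
and since $\kappa_1=\mathrm{id}$ the integrand $u\mapsto\kappa_{-u}(Z(\kappa_u\varphi))$ is $1$-periodic, so the last integral equals $\int_0^1\kappa_{-u}(Z(\kappa_u\varphi))\,du=\tilde{Z}\varphi$. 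Thus $\tilde{Z}$ commutes with every $\kappa_s$, hence with the generator $\mathcal{L}_K$, and therefore preserves each of its eigenspaces; in particular $\tilde{Z}(V_0)\subseteq V_0$. (Equivalently and more concretely: for $\varphi\in V_0$ one has $\kappa_t\varphi=\varphi$, so $\tilde{Z}\varphi=\int_0^1\kappa_{-t}(Z\varphi)\,dt$ is the orbit average of $Z\varphi$, which is manifestly fixed by every $\kappa_s$.)

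The only nonformal point is this periodicity step, and it is precisely where projectability is used: it ensures that $\kappa$ descends to a genuine action of $\bbS^1=\bbR/\bbZ$ with $\kappa_1=\mathrm{id}$, so that averaging over the interval $[0,1]$ produces an honestly $\bbS^1$-invariant section. For a nonprojectable spin structure only the double cover of $\bbS^1$ acts on $\Sigma M$, the natural period becomes $2$, and one would instead have to average over a full period of that action; apart from this, the argument is routine functional analysis.
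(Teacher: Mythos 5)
Your proof is correct and follows essentially the same route as the paper: the paper also conjugates by $\kappa_s$, substitutes $u=t+s$, and uses $1$-periodicity of the integrand to see that the expression is independent of $s$, then differentiates at $s=0$ to get $\mathcal{L}_K(\tilde{Z}\varphi)=0$ for $\varphi\in V_0$. Your formulation via full equivariance of $\tilde{Z}$ (hence preservation of every eigenspace $V_k$, not just $V_0$) and the remarks on the Bochner integral are harmless refinements of the same argument.
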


\begin{proof}
Recall that $V_0 \coloneqq \lbrace \varphi \in L^2(\Sigma M) \vert \mathcal{L}_K (\varphi) =0 \rbrace$. Therefore, we need to show that $\mathcal{L}_K (\tilde{Z} \varphi ) = 0$ for any $\varphi \in V_0$. Let  $\varphi \in V_0$. Then $\kappa_s (\varphi(x)) = \varphi(\kappa_s x)$ for all $s \in [0,1]$ and
\begin{align*}
\mathcal{L}_K (\tilde{Z}(\varphi) ) (x) &= \left. \frac{d}{ds} \right\vert_{s=0} \kappa_{-s} (\tilde{Z} (\varphi) ) (\kappa_s x) \\
&= \left. \frac{d}{ds} \right\vert_{s=0}  \int_0^{1} \kappa_{-s-t}(Z (\kappa_t( \varphi (\kappa_{s} x ))) dt \\
&= \left. \frac{d}{ds} \right\vert_{s=0} \int_0^{1} \kappa_{-s-t}(Z (( \varphi (\kappa_{t+s} x ))) dt \\
&= \left. \frac{d}{ds} \right\vert_{s=0}   \int_0^{1} \kappa_{-t}(Z (( \varphi (\kappa_{t} x ))) dt = 0.
\end{align*}
\end{proof}

If a sequence $(\Za)_{a \in \bbN}$ associated to a collapsing sequence $(\Ma, \ga)_{a \in \bbN}$ in $\mfdspaceCnn$ should converge to a projectable operator we need to ensure that $\Vert \Za - \tZa \Vert_{\infty}$ goes to $0$ as $a$ tends to infinity. This is the content of the next proposition.

\begin{prop}\label{ConvergenceAssOp}
Let $(\Ma, \ga, \Za)_{a \in \bbN}$ be a collapsing sequence in $\opspace$ such that the spin structure on $\Ma$ is projectable for all $a \in \bbN$. Then $\lim_{a \rightarrow \infty}\Vert \Za \Vert_{W^{1, \infty}} \inj(\Ma) = 0$ implies
\begin{align*}
\lim_{a \rightarrow \infty} \Vert \left. \tZa \right._{\vert V_0(a)} - \left.\Za\right._{\vert V_0(a)} \Vert_{L^{\infty}} = 0.
\end{align*}
\end{prop}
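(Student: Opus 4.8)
The plan is to observe that on $V_0(a)$ the associated invariant operator $\tZa$ is again the pointwise action of a bundle endomorphism, to bound the fiberwise difference of that endomorphism and $\Za$ by a constant times $\big(\sup_{\Ma}\la\big)\,\|\Za\|_{W^{1,\infty}}$, and then to conclude via the comparison $\sup_{\Ma}\la \leq C(n,d,C)\,\inj(\Ma)$, which holds once $\Ma$ is sufficiently collapsed.

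\emph{Reduction.} Fix $a$, write $K$ for the Killing field and $\kappa$ for the induced action on $\Sigma\Ma$. For $\varphi \in V_0(a)$ one has $\varphi(\kappa_t x) = \kappa_t(\varphi(x))$ by definition of $V_0$, and inserting this into the definition of $\tZa$ gives
\begin{align*}
(\tZa \varphi)(x) = \Big( \int_0^1 \kappa_{-t} \circ \Za(\kappa_t x) \circ \kappa_t \, dt \Big)\,\varphi(x) \qquad \text{for all } x \in \Ma ,
\end{align*}
so that $\tZa|_{V_0(a)} - \Za|_{V_0(a)}$ is the pointwise action of $x \mapsto \int_0^1\big(\kappa_{-t}\Za(\kappa_t x)\kappa_t - \Za(x)\big)\,dt$. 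Hence
\begin{align*}
\big\| \tZa|_{V_0(a)} - \Za|_{V_0(a)} \big\|_{L^\infty} \leq \sup_{x \in \Ma}\ \sup_{t \in [0,1]} \big\| \Za(x) - \kappa_{-t}\,\Za(\kappa_t x)\,\kappa_t \big\|_{\mathrm{op}} ,
\end{align*}
and everything reduces to bounding the right-hand side.

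\emph{Fiberwise estimate.} Let $P_t \colon \Sigma_x\Ma \to \Sigma_{\kappa_t x}\Ma$ be parallel transport along the orbit $s \mapsto \kappa_s x$; since $\ga$ is invariant, $K$ is Killing and $\la = |K|$ is constant along fibers, so this arc has length $t\,\la(x) \leq \la(x)$ for $t \in [0,1]$. Split
\begin{align*}
\Za(x) - \kappa_{-t}\Za(\kappa_t x)\kappa_t = \big(\Za(x) - P_{-t}\Za(\kappa_t x)P_t\big) + \big(P_{-t}\Za(\kappa_t x)P_t - \kappa_{-t}\Za(\kappa_t x)\kappa_t\big) .
\end{align*}
Integrating $\nabla_K\Za$ along the orbit bounds the first summand by $\la(x)\,\|\nabla\Za\|_{L^\infty}$, and the second is at most $2\,\|\Za\|_{L^\infty}\,\sup_{t\in[0,1]}\|P_t - \kappa_t\|_{\mathrm{op}}$. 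For the last quantity, by Lemma \ref{verticalRelation} the operator $\nabla_K - \mathcal{L}_K$ is the bundle endomorphism $B \coloneqq \tfrac{\la^2}{4}\gamma(\Fa) - \tfrac12\gamma(K)\,\gamma\big(\grad(\la)/\la\big)$; Lemma \ref{Tensors} and Corollary \ref{SubmersionBounds} give $|\grad\la| = \la\,|\Ta(e_0,e_0)| \leq C_T\la$ and $\la\,|\Fa| \leq C(n,d,C)$, whence $\|B\|_{L^\infty} \leq C(n,d,C)\,\sup_{\Ma}\la$. Since $\kappa_t$ transports spinors along the orbit by the flow of $\mathcal{L}_K$ while $P_t$ does so by the parallel transport equation, the automorphism $W(t) \coloneqq P_{-t}\kappa_t$ of $\Sigma_x\Ma$ satisfies $W'(t) = (P_{-t}BP_t)\,W(t)$, $W(0) = \mathrm{id}$, so Gronwall's inequality yields $\|P_t - \kappa_t\|_{\mathrm{op}} = \|W(t) - \mathrm{id}\| \leq C(n,d,C)\,\la(x)$ for $t \in [0,1]$ (recall $\sup_{\Ma}\la$ is bounded). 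Combining the two summands,
\begin{align*}
\big\| \tZa|_{V_0(a)} - \Za|_{V_0(a)} \big\|_{L^\infty} \leq C(n,d,C)\,\big(\sup_{\Ma}\la\big)\,\|\Za\|_{W^{1,\infty}} .
\end{align*}

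\emph{Conclusion.} By the structure theory of codimension one collapse — the fiber length varies by a factor bounded in terms of $C_T$ (Lemma \ref{Tensors}) and the orders of the orbifold groups of $N$ are bounded in terms of $n,d,C$ — one has $\sup_{\Ma}\la \leq C(n,d,C)\,\inj(\Ma)$ once $\Ma$ is sufficiently collapsed, cf.\ \cite{Roos} and \cite{CheegerFukayaGromov}. Hence $\big\|\tZa|_{V_0(a)} - \Za|_{V_0(a)}\big\|_{L^\infty} \leq C(n,d,C)\,\inj(\Ma)\,\|\Za\|_{W^{1,\infty}} \to 0$ by hypothesis. I expect the main obstacle to be the fiberwise step: distilling from Lemma \ref{verticalRelation} the pointwise $O(\la)$ bound on $\|P_t - \kappa_t\|$, i.e.\ controlling how far the spinorial $\bbS^1$-action deviates from parallel transport around the short fibers; the comparison $\sup_{\Ma}\la \lesssim \inj(\Ma)$ is genuine geometric input but is essentially bookkeeping from the known structure theory.
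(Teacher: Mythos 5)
Your proof is correct, and it rests on exactly the same two essential inputs as the paper's: Lemma \ref{verticalRelation} together with the tensor bounds of Lemma \ref{Tensors} and Corollary \ref{SubmersionBounds} to control the zero-order discrepancy between the spinorial $\bbS^1$-action and covariant differentiation along $K$, and the comparison $\sup_{\Ma} \la \lesssim \inj(\Ma)$ (which the paper gets from the gradient bound $\vert \grad \la \vert \leq C_T \la$ plus \cite[Proposition 1.4]{Roos}; your sketch of this step is the same argument). Where you genuinely diverge is the middle step. The paper never introduces parallel transport: it writes $(\tZa - \Za)\phia$ as a double integral of $\kappa_{-s}\mathcal{L}_{\Ka}(\Za \phia)$, then bounds $\Vert \mathcal{L}_{\Ka}(\Za\phia)\Vert_{L^\infty}$ by the Leibniz rule, applying Lemma \ref{verticalRelation} twice — once to convert $\mathcal{L}_{\Ka}$ into $\nabla_{\Ka}$ on the section $\Za\phia$, and once to bound $\nabla_{\Ka}\phia$ using $\mathcal{L}_{\Ka}\phia = 0$. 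You instead exhibit $\tZa|_{V_0} - \Za|_{V_0}$ as the pointwise action of an explicit endomorphism, split off the $\nabla\Za$ contribution via parallel transport along the fiber, and control $\Vert P_t - \kappa_t\Vert$ by a Gronwall argument for $W(t) = P_{-t}\kappa_t$ with generator $P_{-t}BP_t$. Both are two-line-versus-ODE formulations of the same estimate; yours buys the cleaner multiplicative bound $C(n,d,C)\,(\sup_{\Ma}\la)\,\Vert\Za\Vert_{W^{1,\infty}}$ (the paper's chain leaves an additive term $\Vert\Ka\Vert(C_T+C_A)$ not multiplied by $\Vert\Za\Vert$, harmless only because $\Vert\Ka\Vert \to 0$), at the cost of the extra bookkeeping with $P_t$. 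One cosmetic point: you assume $\ga$ invariant from the outset; the paper opens its proof by justifying this reduction via Theorem \ref{invariantMetric} and the isomorphism of the spinor bundles for $\ga$ and $\tga$, and you should say the same one sentence (it is a standing assumption of the section, so this is not a gap).
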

\begin{proof}

First we note that by Theorem \ref{invariantMetric} we can switch to invariant metrics $\tga$ such that $\lim_{a \rightarrow \infty} \Vert \ga - \tga \Vert_{C^1} = 0$. Furthermore, the spinor bundles $\Sigma \Ma$ and $\widetilde{\Sigma \Ma}$ with respect to $\ga$ resp.\ $\tga$ are isomorphic. Hence, we can pull back $\Za$ to an operator in $\Hom(\widetilde{\Sigma \Ma},\widetilde{\Sigma \Ma})$. Therefore, we can assume without loss of generality, the metrics $\ga$ to be invariant.

Let $\phia \in V_0(a)$ with $\Vert \phia \Vert_{L^{\infty}} = 1$. Then,
\begin{align*}
\Vert (\tZa - \Za ) \phia \Vert_{L^{\infty}} & = \left\Vert  \int_0^{1}\kappa_t (\Za (\kappa_t \phia ) ) - \Za (\phia) dt \right\Vert_{L^{\infty}}\\
& = \left\Vert  \int_0^{1} \int_0^t \kappa_{-s} \mathcal{L}_{\Ka} (\Za (\phia) ) ds \, dt \right\Vert_{L^{\infty}} \\
& \leq \frac{1}{2} \Vert \mathcal{L}_{\Ka} ( \Za ( \phia ) ) \Vert_{L^{\infty}}
\end{align*}
Applying Lemma \ref{verticalRelation} and Corollary \ref{SubmersionBounds} we conclude
\begin{align*}
 \Vert \mathcal{L}_{\Ka} (\Za (\phia)) \Vert_{L^{\infty}} & \leq   \Vert \nabla_{\Ka} ( \Za(\phia) ) \Vert_{L^{\infty}} + \Vert \Ka \Vert_{L^{\infty}} (C_T + C_A) \\
 &\leq \Vert \Ka \Vert_{L^{\infty}}  \Vert \nabla \Za \Vert_{L^{\infty}} +  \Vert \Za \Vert_{L^{\infty}} \Vert \nabla_{\Ka} \phia \Vert_{L^{\infty}} \\
 & \ \ \ + \Vert \Ka \Vert_{L^{\infty}} (C_T + C_A) \\
 &\leq \Vert \Ka \Vert_{L^{\infty}}  \Vert \nabla \Za \Vert_{L^{\infty}} + \Vert \Ka \Vert_{L^{\infty}}  \Vert\Za \Vert_{L^{\infty}} (C_T + C_A)\\
 & \ \ \ + \Vert \Ka \Vert_{L^{\infty}} (C_T + C_A).
\end{align*}

First we note, that $\inj(\Ma) = \inj^{\Ma}(x_a)$ for some $x_a \in \Ma$. As the second fundamental form of the fibers of $f_a : \Ma \rightarrow N$ is uniformly bounded by $C_T$, see Corollary \ref{SubmersionBounds}, there is a positive constant $C_1(d,C_T)$ such that
\begin{align*}
\Vert \Ka \Vert_{L^{\infty}} \leq  C_1(d,C_T) \vert K_{x_a} \vert = C_1(d,C_T) \frac{1}{\pi} \inj(F_{p_a})
\end{align*}
where $F_{p_a} \coloneqq f_a^{-1}(p_a ) \cong \bbS^1$ is the fiber over $p_a \coloneqq f_a(x_a)$. By combining Corollary \ref{SubmersionBounds} and \cite[Proposition 1.4]{Roos}, there is a further constant $C_2$ such that \begin{align*}
\inj(F_{p_a}) \leq C_2 \inj^{\Ma}(x_a) = C_2 \inj(\Ma).
\end{align*} 
Thus, the claim follows.
\end{proof}

\section{Dirac Operators with potential}

In this section we describe the behavior of the spectrum of Dirac operators with symmetric $W^{1, \infty}$-potential. For any collapsing sequence of spin manifolds in $\mfdspaceCnn$ there is a subsequence either consisting only of spin manifolds with nonprojectable spin structure or consisting only of those with projectable spin structure such that they all induce the same spin structure, resp.\ \pin structure on the limit space $N$. Therefore we consider these two cases separately.

We will show that in the case of nonprojectable spin structures all eigenvalues  diverge, whereas in the case of projectable spin structures only a part of the spectrum diverge while the other part converges to the spectrum of a Dirac operator $D$ with $W^{1, \infty}$-potential on the limit space $N$. If $N$ is odd, then $D$ is the classical Atiyah-Singer Dirac operator $D^N$ on the spinor bundle $\Sigma N$ if $n$ is even and if $n$ is odd then $D = D^N\oplus -D^N$ is the  Dirac operator on $\Sigma^+N \oplus \Sigma^- N$. If $N$ is nonorientable then $D$ is the twisted Dirac operator $\tilde{D}^N$ on the twisted \pin  \ bundle $\Sigma^P N \otimes \mathcal{K}^{\bbC}$ if $n$ is even and  $D = \tilde{D}^N\oplus -\tilde{D}^N$ is the twisted Dirac operator on $(\Sigma^{P+} \oplus \Sigma^{P-}) \otimes \mathcal{K}^{\bbC}$ if $n$ is odd, where $\mathcal{K}^{\bbC}$ is the complexified determinant bundle of $N$.

 \subsection{The case of nonprojectable spin structures}

First we deal with sequences $(\Ma, \ga, \Za)_{a \in \bbN}$ in $\opspace$, where the spin structure on $\Ma$ is nonprojectable, collapsing to a Riemannian orbifold $(N,h)$. Recall that after passing to invariant metrics (see Theorem \ref{invariantMetric} ) and to the orientation covering, the space of $L^2$-spinors decomposes as
\begin{align*}
L^2(\Sigma \Ma ) = \bigoplus_{k \in \left( \bbZ + \frac{1}{2} \right)} V_k(a),
\end{align*}
where $V_k(a)$ is the eigenspace of the Lie derivative $\mathcal{L}_{K_a}$ along the fibers of the $\bbS^1$-bundle $f_a: \Ma \rightarrow N$ with respect to the eigenvalue $ik$. In this setting, $0$ is not an eigenvalue. Thus, there are no spinors which are invariant under the $\bbS^1$-action. This can be interpret as an indication why the eigenvalues of $\Da + \Za$ should diverge in the limit.

\begin{thm}\label{nonprojectableEigenvalues}
Let $(\Ma, \ga, \Za)_{a \in \bbN}$ be a collapsing sequence in $\opspace$ such that the spin structures of $\Ma$ are nonprojectable. Suppose further that $\Za$ is symmetric and that there is a positive constant $\Lambda$ such that $\Vert \Za \Vert_{L^{\infty}} \leq \Lambda$ for all $a \in \bbN$. Then we can number the eigenvalues $\left( \lambda_{k,j}(a) \right)_{k \in \left(\bbZ + \frac{1}{2} \right), j \in \bbZ}$ of $\Da + \Za$ such that, for all $\eps > 0$ there is an $A > 0$ such that for all $a \geq A$
\begin{align*}
\vert \lambda_{k,j}(a)\vert \geq \sinh \left( \arsinh \left(\frac{k}{l_a}- \frac{1}{2}\left[ \frac{n}{2} \right]^{\frac{1}{2}} C_A  - \eps  \right) - \eps \right) - \Lambda.
\end{align*}
In particular, as $\lim_{a \rightarrow \infty} l_a = 0$ all eigenvalues diverge as $a$ tends to infinity.
\end{thm}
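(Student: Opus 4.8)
The plan is to reduce everything to a lower bound on $\Da$ alone and then absorb $\Za$ via a standard perturbation estimate. Since $\Za$ is symmetric with $\|\Za\|_{L^\infty}\le\Lambda$, for any eigenvalue $\mu$ of $\Da+\Za$ there is an eigenvalue $\lambda$ of $\Da$ with $|\mu-\lambda|\le\Lambda$; conversely the min-max principle lets me match up the spectra so that $|\lambda_{k,j}(a)|\ge|\lambda^{\Da}_{k,j}(a)|-\Lambda$ with a consistent numbering. So it suffices to produce, for each $k\in(\bbZ+\tfrac12)$, a lower bound of the form $|\lambda^{\Da}_{k,j}(a)|\ge\sinh\!\big(\arsinh(k/l_a-\tfrac12[\tfrac n2]^{1/2}C_A-\eps)-\eps\big)$ on the part of the spectrum of $\Da$ lying in $V_k(a)$.

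The core of the argument works inside a fixed eigenspace $V_k(a)$. For a spinor $\varphi\in V_k(a)$, split the Dirac operator into its vertical and horizontal parts, $\Da=\gamma(e_0)\nabla_{e_0}+\sum_{i\ge1}\gamma(e_i)\nabla_{e_i}$ plus curvature correction terms. Using Lemma \ref{verticalRelation} to replace $\nabla_{\Ka}$ by $\mathcal{L}_{\Ka}$ modulo the zero-order terms $\tfrac{l_a^2}{4}\gamma(\Fa)$ and $-\tfrac12\gamma(\Ka)\gamma(\grad(l_a)/l_a)$, and using $\mathcal{L}_{\Ka}\varphi=ik\varphi$, the vertical part of $\Da\varphi$ contributes a term of size $\tfrac{k}{l_a}$ (after dividing by $l_a=|\Ka|$ to pass to the unit vertical vector $e_0$), up to the curvature terms which by Corollary \ref{SubmersionBounds} are controlled by $C_A$ and $C_T$; the precise bookkeeping is where the constant $\tfrac12[\tfrac n2]^{1/2}C_A$ enters, via $|\gamma(\Fa)|$ being comparable to $|\Aa|/l_a$ from Lemma \ref{Tensors}. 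The horizontal part, together with the $Q_k$-identification, behaves like the Dirac operator on $N$ twisted by $L^{-k}$, which is essentially self-adjoint with real spectrum; crucially the vertical Clifford multiplication $i\gamma(e_0)$ anticommutes with horizontal Clifford multiplication, so the vertical and horizontal contributions to $\Da^2$ do not fully cancel. Squaring $\Da$ and using this anticommutation gives $\Da^2\ge(\text{vertical part})^2-(\text{commutator remainder})$, and the hyperbolic functions arise because iterating the estimate — or more precisely tracking how the lower bound propagates through the twisted Dirac operator whose zeroth-order curvature term itself depends on $k/l_a$ — produces the $\arsinh/\sinh$ composition rather than a plain linear bound. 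This is the same mechanism as in \cite{Ammann}, adapted to allow the non-vanishing $\la \Fa$.

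The two $\eps$'s are absorbed using Corollary \ref{ConvergenceATensor}: after passing to a subsequence, $\la\Fa$ converges in $C^0$, so the curvature terms can be replaced by their limits up to an error $<\eps$ for $a$ large, and the approximation of the horizontal operator by the genuine twisted Dirac operator on $(N,h)$ introduces the second $\eps$ (this uses the $C^{1,\alpha}$-regularity of $h$ from Theorem \ref{CollapseSpace}, so that the twisted Dirac operator on $N$ is well defined and its spectrum is discrete and real). Finally, since $\lim_{a\to\infty}l_a=0$ while $|k|\ge\tfrac12$, the quantity $k/l_a\to\infty$, hence $\arsinh(k/l_a-\cdots)\to\infty$ and so the lower bound $\to\infty$, giving the divergence of all eigenvalues.

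\medskip

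\noindent\textbf{Main obstacle.} The delicate point is the precise derivation of the $\sinh(\arsinh(\cdot)-\eps)-\Lambda$ bound rather than a cruder linear one: one must carefully combine the squared Dirac estimate, the anticommutation of $i\gamma(e_0)$ with horizontal Clifford multiplication, and the fact that the "effective" horizontal operator is itself a twisted Dirac operator whose curvature twist scales with $k/l_a$. Keeping all the curvature error terms uniformly controlled (via $C_A$, $C_T$, $C_R$ and Corollary \ref{ConvergenceATensor}) while extracting exactly the stated constant $\tfrac12[\tfrac n2]^{1/2}C_A$ is the technically demanding part; everything else is either a direct quotation of \cite{Ammann}-type computations or a routine perturbation/min-max argument.
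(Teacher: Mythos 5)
Your outer structure is right: absorb $\Za$ by a symmetric-perturbation estimate (the paper uses \cite[Chapter 5, Theorem 4.10]{Kato}, which is exactly your $|\mu-\lambda|\le\Lambda$ step), then bound the spectrum of $\Da$ on each $V_k(a)$ from below by roughly $|k|/l_a$ minus the norm of the zero-order curvature term. The paper also treats that curvature term exactly as a bounded symmetric perturbation (norm $\le\tfrac12[\tfrac n2]^{1/2}C_A$ via \cite[Lemma 3.3]{MoroianuHerzlich}, then Kato again), and quotes Ammann for $|\lambda^{W}_{k,j}(a)|\ge |k|/l_a-\eps$; so that part of your plan is sound and in fact simpler than you fear.

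However, there are two genuine gaps. First, you never pass to the invariant metrics $\tga$ of Theorem \ref{invariantMetric}. For the original metrics $\ga$ the $\bbS^1$-action need not be isometric, so the decomposition $L^2(\Sigma\Ma)=\bigoplus_k V_k(a)$ does not diagonalize $\Da$, the Riemannian submersion $f_a$ does not exist, and the Ammann splitting of the Dirac operator you rely on is simply not available. All of your "core argument inside $V_k(a)$" only makes sense after replacing $\ga$ by $\tga$. Second, and consequently, you misidentify the source of the $\sinh(\arsinh(\cdot)-\eps)$ composition. It does not come from squaring $\Da$, from the anticommutation of $i\gamma(e_0)$ with horizontal Clifford multiplication, or from "iterating the estimate through the $k$-dependent twist" — the paper never squares the operator. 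It comes from the spectral stability of Dirac operators under $C^1$-small changes of the metric: by \cite[Main Theorem 2]{Nowaczyk}, $|\arsinh(\lambda^{\tilde D}_{k,j}(a))-\arsinh(\lambda^{D}_{k,j}(a))|\le C(a)$ with $C(a)\to 0$, and applying $\sinh$ to the resulting lower bound for $\tilde D_a$ produces exactly the stated expression. Your "main obstacle" paragraph correctly flags that you cannot derive this shape of bound from your mechanism; that is because the mechanism is the wrong one, and without the metric-comparison step the argument cannot be completed as proposed.
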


\begin{proof}
Considering Theorem \ref{CollapseSpace} there is an $n$-dimensional Riemannian orbifold $N$ such that a subsequence of $(\Ma, \ga, \Za)_{a \in \bbN}$ converges to $N$ in the Gromov-Hausdorff topology. In addition, it follows by Theorem \ref{invariantMetric} that $\Ma \rightarrow N$ is an $\bbS^1$-orbifold bundle with affine structure group for sufficiently large $a$. If $N$ is orientable then this is an $\bbS^1$-principal orbifold bundle and if $N$ is nonorientable we consider the pullback bundle over the orientation covering $\hat{N}$ which is then also a $\bbS^1$-principal bundle. Since nonprojectable spin structures pulls back to nonprojectable structures we can without loss of generality assume that the possible limit space $N$ is orientable.

From \cite[Chapter 5, Theorem 4.10]{Kato} it follows that
\begin{align}\label{FirstKato}
\dist (\sigma(\Da + \Za ) , \sigma(\Da) ) \leq \Vert \Za \Vert_{L^{\infty}} \leq \Lambda
\end{align}
where $\sigma(\Da + \Za)$, resp.\ $\sigma (\Da)$, denotes the spectrum of $\Da + \Za$, resp.\ $\Da$. Let $\lambda_{k,j}^D(a)$ be the eigenvalues of $\Da$. 

Apply Theorem \ref{invariantMetric} to all $a$. We obtain the invariant metrics $\tga$ satisfying
\begin{align*}
\lim_{a \rightarrow \infty}  \Vert \ga - \tga \Vert_{C^{1, \alpha}} = 0.
\end{align*}

The change of the spectra is controlled by
\begin{align}\label{Arsinh}
\vert \arsinh(\lambda_{k,j}^{\tilde{D}}(a)) - \arsinh( \lambda_{k,j}^D(a) ) \vert \leq C(a) \, , \, \lim_{a \rightarrow \infty} C(a) = 0 ,
\end{align}
as stated in \cite[Main Theorem 2]{Nowaczyk}.

Since $\bbS^1$ acts by isometries on $(\Ma, \tga)$ there is a Riemannian submersion 
\begin{align*}
f_a : \Ma \rightarrow \faktor{\Ma}{\bbS^1} \eqqcolon N.
\end{align*}
In particular, the Lie derivative $\mathcal{L}_{\Ka}$ along the fibers and the Dirac operator $\tDa$ are simultaneously diagonalizable. Therefore, we can number the eigenvalues of $\tDa$ as follows: For any fixed $k \in \left( \bbZ + \frac{1}{2} \right)$ we denote with $\lambda_{k,j}(a)$ the eigenvalues of $\left.\tDa\right._{\vert V_k(a)}$ such that
\begin{align*}
\ldots \leq \lambda_{k,-1}(a) \leq \lambda_{k,0}(a) < 0 \leq \lambda_{k,1}(a) \leq \lambda_{k,2}(a) \leq \ldots
\end{align*}

As shown in \cite{Ammann}, the Dirac operator splits as
\begin{align*}
\tDa = \frac{1}{\la} \gamma\left(\frac{\Ka}{\la}\right) \mathcal{L}_K + D^H_a - \frac{1}{4}\gamma\left(\frac{\Ka}{\la}\right) \gamma(\la \Fa),
\end{align*}
where $D^H_a$ is described by its action on the eigenspaces $V_k$ of $\mathcal{L}_{\Ka}$, namely
\begin{align*}
\left. D^H_a \right._{\vert V_k(a)} \coloneqq Q_{k,a} \circ D_{k,a} \circ Q_{k,a}^{-1}.
\end{align*}
Here $D_{k,a}$ is the twisted Dirac operator on $\Sigma N \otimes L_a^{-k}$ if $n$ is even, and on $(\Sigma^+ N \oplus \Sigma^- N) \otimes L_a^{-k}$, if $n$ is odd. 

By Lemma \ref{Tensors},  $\Vert \la \Fa \Vert_{\infty}$ is controlled by the norm of the $A$-tensor, i.e.\ by the constant $C_A$, see Corollary \ref{SubmersionBounds}. Applying \cite[Lemma 3.3]{MoroianuHerzlich} we observe that
\begin{align*}
\left\Vert \frac{1}{4}\gamma\left(\frac{K}{\la}\right) \gamma(\la \Fa) \right\Vert \leq \frac{1}{2} \left[ \frac{n}{2} \right]^{\frac{1}{2}} C_A
\end{align*} 
By \cite[Chapter 5, Theorem 4.10]{Kato} it follows that
\begin{align}\label{SecondKato}
\dist \left(\sigma (\tilde{D}_a) , \sigma \left( \frac{1}{\la} \gamma\left(\frac{K}{\la}\right) \mathcal{L}_K + D^H_a  \right) \right) \leq \frac{1}{2} \left[ \frac{n}{2} \right]^{\frac{1}{2}} C_A.
\end{align}
Let $\lambda_{k,j}^{W}(a)$ be the eigenvalues of $W \coloneqq  \frac{1}{\la} \gamma\left(\frac{K}{\la}\right) \mathcal{L}_K + D^H_a$. It was shown in \cite{Ammann} that for all $\eps > 0$ there is an $A \geq 0$ such that 
\begin{align*}
\vert \lambda_{k,j}^{W}(a)\vert \geq \frac{\vert k \vert}{\la} - \eps,
\end{align*}
for all $a \geq A$.

Going now all the reductions backwards, (i.e.\ applying all the inequalities \eqref{SecondKato}, \eqref{Arsinh}, \eqref{FirstKato} ) the claim follows.
\end{proof}

\subsection{The case of projectable spin structures}

Let $(\Ma, \ga,\Za)_{a \in \bbN}$ be a collapsing sequence in $\opspace$ with limit space $N$. We have seen in the last section that in the case of nonprojectable spin structures, all eigenvalues diverge as $a$ tends to infinity. But in the case of projectable spinors, after passing to invariant metrics (see Theorem \ref{invariantMetric} ) and to the orientation covering if necessary, 
\begin{align*}
L^2(\Sigma \Ma) = \bigoplus_{k \in \bbZ} V_k(a),
\end{align*}
where $V_k(a)$ is the eigenspace of the Lie derivative $\mathcal{L}_{K_a}$ along the fibers of the $\bbS^1$-bundle $f_a : \Ma \rightarrow N$ with respect to the eigenvalue $ik$. In particular, recall that in the case of $N$ being orientable, $V_0(a)$ is isometric to $L^2(\Sigma N)$ if $n$ is even, and isometric to $L^2(\Sigma^+ N \oplus \Sigma^- N)$ if $n$ is odd. If the limit space $N$ is nonorientable then the subspace $V_0$ is isometric to $L^2(\Sigma^ N \otimes \mathcal{})$ if $n$ is even, and isometric to $L^2(\Sigma^+ N \oplus \Sigma^- N)$ if $n$ is odd 

To summarize the result, we obtain similar lower bounds on the eigenvalues of $\Da + \Za$ as in the case of nonprojectable spin structures. But, as $k$ can be chosen to be $0$, it does not follow from this lower bound that the eigenvalues $(\lambda_{0,j}(a))_{j \in \bbZ}$ diverge. On contrary, we show that the eigenvalues $(\lambda_{0,j}(a))_{j \in \bbZ}$ converge to the eigenvalues of the Dirac operator on the respective Clifford bundle with a symmetric $W^{1, \infty}$-potential.

\begin{thm}\label{projectableEigenvalues}
Let $(\Ma, \ga , \Za)_{a \in \bbN}$ be a sequence in $\opspace$ collapsing to $(N,h)$. Suppose that the spin structures of $\Ma$ are projectable and induce the same spin structure on $N$ for all $a \in \bbN$. Suppose further that $\Za$ is symmetric and that there is a positive constant $\Lambda$ such that $\Vert \Za \Vert_{L^{\infty}} \leq \Lambda$ for all $a \in \bbN$. Then we can number the eigenvalues $(\lambda_{k,j}(a) )_{k \in \bbZ, j \in \bbZ}$ of $\Da + \Za$ such that for all $\eps > 0$ there is an $A \geq 0$ such that for all $a \geq A$
\begin{align*}
\vert \lambda_{k,j}(a)\vert \geq \sinh \left( \arsinh \left(\frac{k}{l_a} - \frac{1}{2}\left[ \frac{n}{2} \right]^{\frac{1}{2}} C_A  - \eps \right) - \eps \right) - \Lambda.
\end{align*}
In particular, as $\lim_{a \rightarrow \infty} \la = 0$ all eigenvalues $\lambda_{k,j}(a)$ with $k \neq 0$ diverge as $a$ tends to infinity.

If in addition $\Vert \Za \Vert_{W^{1, \infty}}\leq \Lambda$ holds for all $a \in \bbN$. Then, the eigenvalues $\lambda_{0,j}(a)$ of $\Da+\Za$ converge to the eigenvalues of the operator
\begin{align*}
D^N + \frac{i}{4} \omega_n^{\bbC} \gamma(\mathcal{F}) + \mathcal{Z}, &\qquad \text{if $n$ is even,} \\
& \\
\begin{pmatrix*}
D^N + \mathcal{Z}^{++}&  \frac{i}{4} \gamma(\mathcal{F})+ \mathcal{Z}^{-+} \\
\frac{i}{4} \gamma(\mathcal{F})+ \mathcal{Z}^{+-} & -D^N + \mathcal{Z}^{--}
\end{pmatrix*},
&\qquad \text{if $n$ is odd.}
\end{align*}

If $N$ is orientable then $D^N$ is the Dirac operator on $\Sigma N$,  $\mathcal{Z}$ is a $W^{1, \infty}$-operator on $\Sigma N$, resp.\ $\Sigma^+N \oplus \Sigma^-N$, $\omega^{\bbC}_n$ is the complex volume element of $\Sigma N$, resp.\ $\Sigma^+N \oplus \Sigma^-N$ and $\mathcal{F}$ is the limit two-form of the sequence $(\cFa)_{a \in \bbN}$, where $f_a^{\ast} \cFa = \la \Fa$.

If $N$ is nonorientable then $D^N$ is the twisted Dirac operator on the twisted \pin \ bundle $\Sigma^P \otimes \mathcal{K}^{\bbC}$, where $\mathcal{K}^{\bbC}$ is the complexified determinant bundle, and $\mathcal{F}$ is the limit two-form of the sequence $(\mathcal{F}_a)_{a \in \bbN} \subset \Omega^2(N, \mathcal{K})$ where $f_a^{\ast} \mathcal{F}_a = -2 A_a$ and the limit objects $\mathcal{Z}$ and $\omega_n^{\bbC}$ act on the twisted \pin \ bundles  $\Sigma^P N \otimes \mathcal{K}^{\bbC}$, resp.\ $(\Sigma^{P+}N \oplus \Sigma^{P-}N) \otimes \mathcal{K}^{\bbC}$
\end{thm}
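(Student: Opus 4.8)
\emph{The lower bound.} I would prove the inequality for $|\lambda_{k,j}(a)|$ exactly as in Theorem~\ref{nonprojectableEigenvalues}, the only difference being that $k$ now ranges over $\bbZ$: Kato's perturbation theorem \cite{Kato} gives $\dist(\sigma(\Da+\Za),\sigma(\Da))\le\Lambda$; the stability estimate \cite{Nowaczyk} lets one replace $\ga$ by an invariant metric $\tga$ with $\|\ga-\tga\|_{C^{1,\alpha}}\to0$; the Ammann splitting $\tDa=\frac1{\la}\gamma(\Ka/\la)\mathcal L_K+D^H_a-\frac14\gamma(\Ka/\la)\gamma(\la\Fa)$ together with $\|\frac14\gamma(\Ka/\la)\gamma(\la\Fa)\|\le\frac12[n/2]^{1/2}C_A$ (via \cite{MoroianuHerzlich} and Corollary~\ref{SubmersionBounds}) and Kato again reduce matters to $W\coloneqq\frac1{\la}\gamma(\Ka/\la)\mathcal L_K+D^H_a$; and the bound $|\lambda^W_{k,j}(a)|\ge|k|/\la-\eps$ from \cite{Ammann} closes the estimate. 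Since $\la\to0$, every eigenvalue with $k\neq0$ diverges, and nothing is claimed yet for $k=0$.

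\emph{The $V_0$-sector.} For the convergent part I would first pass to the invariant metrics $\tga$; since $\Da-\tDa$ is relatively bounded with relative bound $C(a)\to0$ (the mechanism underlying \cite{Nowaczyk}) and $\Za$ is uniformly bounded, the eigenvalues of $\Da+\Za$ and of $\tDa+\Za$ differ by a quantity tending to $0$ on every bounded spectral window, so we may assume the $\ga$ invariant. Then $\bbS^1$ acts by isometries, $L^2(\Sigma\Ma)=\bigoplus_{k\in\bbZ}V_k(a)$ and $\tDa$ respects the splitting, so on $V_0(a)$ the first summand of the Ammann splitting vanishes and $\tDa|_{V_0(a)}=D^H_a|_{V_0(a)}-\frac14\gamma(e_0)\gamma(f_a^{\ast}\cFa)$ with $e_0=\Ka/\la$. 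Conjugating by the isometry $Q_0=Q_{0,a}$ and using the Clifford rules $\gamma(\tilde X)Q_0(\phi)=Q_0(\gamma(X)\phi)$ and $i\gamma(V)Q_0(\phi)=Q_0(\omega_n^{\bbC}\phi)$ together with their odd-$n$ analogues turns $D^H_a|_{V_0}$ into $D^N_{\ha}$ on $\Sigma N$ when $n$ is even, resp.\ into $D^N_{\ha}\oplus-D^N_{\ha}$ on $\Sigma^+N\oplus\Sigma^-N$ when $n$ is odd, and turns the zeroth-order term into $\frac i4\omega_n^{\bbC}\gamma(\cFa)$, resp.\ into the off-diagonal operator $\frac i4\gamma(\cFa)$ since $i\gamma(e_0)$ interchanges $\Sigma^{\pm}_n$. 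Here the hypothesis that all $\Ma$ induce the \emph{same} spin structure on $N$ is used, so that these identifications take place in one fixed bundle. Finally $\|\Za\|_{W^{1,\infty}}\inj(\Ma)\le\Lambda\inj(\Ma)\to0$, so Proposition~\ref{ConvergenceAssOp} gives $\|\Za|_{V_0}-\tZa|_{V_0}\|_{L^{\infty}}\to0$ and we may replace $\Za|_{V_0}$ by the projectable operator $\tZa|_{V_0}$, which $Q_0$ identifies with a symmetric bundle map $\cZa$ of $\Sigma N$ (resp.\ with the four blocks $\mathcal Z^{\pm\pm}$). Hence $\lambda_{0,j}(a)$ coincides, up to an error tending to $0$, with the $j$-th eigenvalue of $D^N_{\ha}+\frac i4\omega_n^{\bbC}\gamma(\cFa)+\cZa$, resp.\ of its odd-$n$ matrix form.

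\emph{Passing to the limit.} It then remains to let $a\to\infty$. The metrics $\ha$ converge to $h$ in $C^{1,\alpha}$ (Theorem~\ref{CollapseSpace} and the fibration theorem), the forms $\cFa$ converge in $C^{0,\alpha}$ along a subsequence (Corollary~\ref{ConvergenceATensor}), and the $\cZa$ are uniformly bounded in $W^{1,\infty}(N)$: this is inherited from the uniform $W^{1,\infty}$-bound on $\Za$, because $\tZa$ is an average of conjugates by isometries and comparing covariant derivatives on $\Ma$ with those on $N$ through $Q_0$ costs only constants depending on the curvature bound $K$ of Theorem~\ref{CollapseSpace}. By the compact embedding $W^{1,\infty}(N)\hookrightarrow C^{0,\alpha}(N)$ a further subsequence of $\cZa$ converges in $C^{0,\alpha}$ to a symmetric $W^{1,\infty}$-operator $\mathcal Z$. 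All coefficients of $D^N_{\ha}+\frac i4\omega_n^{\bbC}\gamma(\cFa)+\cZa$ thus converge in the relevant norms, so its spectrum converges to that of $D^N_h+\frac i4\omega_n^{\bbC}\gamma(\mathcal F)+\mathcal Z$; combined with the previous paragraph this yields the convergence of $\lambda_{0,j}(a)$. The nonorientable case is handled identically, replacing $\Sigma N$ by the twisted \pin\ bundle $\Sigma^PN\otimes\mathcal K^{\bbC}$, $Q_0$ by $Q_0^P$, $\cFa$ by the $\mathcal K$-valued form with $f_a^{\ast}\cFa=-2A_a$ (Corollary~\ref{NonorientableConvergenceATensor}), and passing to the orientation covering $\hat N$ wherever the bundle $\Ma\to N$ fails to be principal.

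\emph{Main difficulty.} I expect the genuinely delicate point to be the last step: proving a \emph{quantitative} spectral continuity for the family $D^N_{\ha}+\frac i4\omega_n^{\bbC}\gamma(\cFa)+\cZa$ when $N$ is only a Riemannian orbifold carrying a $C^{1,\alpha}$-metric and the coefficients converge merely in $C^{0,\alpha}$. This needs a stability estimate of arsinh-type (in the spirit of \cite{Nowaczyk}) valid on orbifolds and robust under first-order perturbations, and one must check carefully that the uniform $W^{1,\infty}$-bound really descends to $N$ so that Arzel\`a--Ascoli does produce the limit potential $\mathcal Z$.
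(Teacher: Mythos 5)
Your proposal is correct and follows essentially the same route as the paper: the lower bound is obtained by rerunning the nonprojectable argument with $k\in\bbZ$, and the $k=0$ sector is treated by passing to invariant metrics, replacing $\Za$ by $\tZa$ via Proposition~\ref{ConvergenceAssOp}, conjugating by $Q_{0,a}$ (resp.\ $Q^P_{0,a}$), and invoking Corollaries~\ref{ConvergenceATensor} and \ref{NonorientableConvergenceATensor} together with the uniform $W^{1,\infty}$-bound on $\cZa$ to pass to the limit. The only difference is one of emphasis: you flag the final spectral-continuity step on the $C^{1,\alpha}$-orbifold as delicate, whereas the paper disposes of it by noting that the induced operators converge in norm and $Q_{0,a}$ is an isometry.
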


\begin{proof}
The proof for the lower bound on the eigenvalues $\lambda_{k,j}(a)$ of the operator $\Da + \Za$ is similar to the proof of Theorem \ref{nonprojectableEigenvalues}. The only change lies in the fact that $k$ takes now values in $\bbZ$ instead of $(\bbZ + \frac{1}{2})$.

For the second part of the theorem, we first pass to invariant metrics $\tga$ such that $\bbS^1$ acts on $(\Ma, \tga)$ by isometries and $\lim_{a \rightarrow \infty} \Vert \ga - \tga \Vert_{C^1} = 0$, see Theorem \ref{invariantMetric}. Observe that therefore
\begin{align*}
\lim_{a \rightarrow \infty} \Vert \Da - \tilde{D}_a \Vert = 0
\end{align*}
and thus
\begin{align*}
\lim_{a \rightarrow \infty}\dist (\sigma (\Da), \sigma(\tilde{D}_a ) ) = 0.
\end{align*}
Hence, we can assume without loss of generality that the metrics $\ga$ of the sequence $(\Ma, \ga)_{a \in \bbN}$ are all invariant.

For each $a \in \bbN$, recall the associated operator $\tZa$ which is $\bbS^1$-invariant, i.e.\ $\tZa (V_0(a) ) \subset V_0(a)$. Furthermore, it follows with Proposition \ref{ConvergenceAssOp} that
\begin{align*}
\lim_{a \rightarrow \infty} \Vert \left.\Za\right._{\vert V_0(a)} - \left. \tZa \right._{\vert V_0(a)} \Vert_{L^{\infty}} = 0
\end{align*}
as $\Vert \Za \Vert_{W^ {1, \infty}} \leq \Lambda$ for all $a \in \bbN$. 

If $N$ is orientable, we consider the operator
\begin{align*}
\left. (\Da+ \tZa) \right._{\vert V_0(a)} & = Q_{0,a} \circ D^N_a \circ Q_{0,a}^{-1} + \frac{i^2}{4}\gamma\left( \frac{K}{\la}\right) \gamma(\la \Fa)  + \tZa \\
&=
\begin{cases}
Q_{0,a} \circ ( D^N_a + \frac{i}{4} \omega_{n,a}^{\bbC} \gamma(\cFa) + \cZa ) \circ Q_{0,a}^{-1} , & \text{if $n$ is even} \\
Q_{0,a} \circ \begin{pmatrix*}
D^N_a + \cZa^{++} &  \frac{i}{4} \gamma(\cFa) + \cZa^{-+} \\
\frac{i}{4} \gamma(\cFa) + \cZa^{+-} & - D^N_a + \cZa^{--}
\end{pmatrix*} \circ Q_{0,a}^{-1} , &\text{if $n$ is odd}
\end{cases}
\end{align*}

Studying each part separately we first observe that $D^N_a$ acting on $L^2(\Sigma_a N)$ converge in norm to the operator $D^N$ acting on $L^2(\Sigma N)$ as the quotient metric $\tilde{h}_a$ on $N$ converge in $C^1$. This also concludes the convergence for $D^N_a$ in the odd dimensional case. For the same reason, the complex volume element $\omega_{n,a}^{\bbC}$ converge to the complex volume element on $\Sigma N$. 

By Corollary \ref{ConvergenceATensor} it also follows that there is a subsequence such that the two forms $\cFa$ converge to a continuous two-form $\mathcal{F}$ on $N$.

Finally, by the assumption that the operators $\Za$ are uniformly bounded in $W^{1, \infty}$ it follows that the same bounds hold for the induced operators $\cZa$ as $\bbS^1$ acts by isometries. Hence, there is a further subsequence, such that $\cZa$ converge in $L^{\infty}$ to a limit operator $\mathcal{Z}$ in $W^{1, \infty}$.

Putting this all together it follows that sequence $  \left( (D_a + \Za)_{\vert V_0(a)} \right)_{a \in \bbN}$ induces a sequence of operators on $N$ that converge in norm to the claimed limit operator. As $Q_{0,a}$ is for all $a \in \bbN$ an isometry the claim follows.

If $N$ is nonorientable we have slightly different representation of $\left. (\Da+ \tZa) \right._{\vert V_0(a)}$ since there is no globally well-defined unit vertical vector field. In that case we can write
\begin{align*}
\left. (\Da+ \tZa) \right._{\vert V_0(a)} & =  Q^P_{0,a} \circ D^N_a \circ \left(Q^P_{0,a}\right)^{-1} - \frac{i^2}{2}\gamma(\tilde{A}_a) + \tZa\\
&=
\begin{cases}
Q^P_{0,a} \circ ( D^N_a + \frac{i}{4} \omega_{n,a}^{\bbC} \gamma(\cFa) + \cZa ) \circ \left(Q^P_{0,a}\right)^{-1} , & \text{if $n$ is even} \\
Q_{0,a} \circ \begin{pmatrix*}
D^N_a + \cZa^{++} &  \frac{i}{4} \gamma(\cFa) + \cZa^{-+} \\
\frac{i}{4} \gamma(\cFa) + \cZa^{+-} & - D^N_a + \cZa^{--}
\end{pmatrix*} \circ \left(Q^P_{0,a}\right)^{-1} , &\text{if $n$ is odd}
\end{cases}
\end{align*}
Here $\tilde{A}_a$ is the restriction of $\tilde{A}_a$ to $\mathcal{H} \times \mathcal{H}$, where $\mathcal{H}$ is the horizontal distribution and $\mathcal{F}_a$ is a two forms with values in the determinant bundle $\mathcal{K}$ such that $f_a^{\ast} \mathcal{F}_a = -2 \tilde{A}_a$. Then the claim follows completely analogous to the oriented case with the slight modification that the convergence of the two forms $\mathcal{F}_a$ follows from Corollary \ref{NonorientableConvergenceATensor}.

\end{proof}

\begin{rem}\label{HigherCodimension}
We have concentrated on the case of codimension one collapse as for higher codimension the limit space can have singularities where the sectional curvatures are unbounded and thus also the $A$-tensor, see \cite[Theorem 0.9]{FukayaBoundary} and \cite[Theorem 1.2]{NaberTian}. Then one could ask if the same strategy would work if we assume the limit space to be a Riemannian manifold. However, by \cite[Proposition 1.1]{MoroianuDiss} the Dirac operator only maps projectable spinors to projectable spinors if and only if the structural group is abelian. Therefore we do not have this characterization in the case of collapsing infranil bundles that are covered by a non-abelian nilpotent group. We are confident that the same strategy should work, after a few modifications, in the setting of flat fiber bundles and hope that this can be in the end generalized to the case of smooth limits in any codimension.
\end{rem}

\section{Discussion for the Dirac Operator without a potential}

In this section we consider the results for the behavior of the spectrum of the Dirac operator under collapse of codimension one without an additional potential and discuss the differences to the results in \cite{Ammann} and \cite{LottDirac}. Consider a collapsing sequence $(\Ma, \ga)_{a \in \bbN}$ of spin manifolds in $\mfdspaceCnn$ converging to an $n$-dimensional Riemannian orbifold $(N,h)$. For simplicity, we assume that the metrics $\ga$ are always invariant in the sense of Theorem \ref{invariantMetric}. As a corollary from Theorem \ref{nonprojectableEigenvalues} and Theorem \ref{projectableEigenvalues} we obtain

\begin{cor}\label{ClassicDirac}
Let $(\Ma, \ga)_{a \in \bbN}$ be a collapsing sequence of spin manifolds in $\mfdspaceCnn$ with limit space $N$. Then we can number the eigenvalues $( \lambda_{k,j}(a))_{k,j}$ of the Dirac operator $\Da$, where $j \in \bbZ$ and $k \in \bbZ$ if the spin structure on $\Ma$ is projectable and $k \in \left( \bbZ + \frac{1}{2} \right)$ if the spin structure on $\Ma$ is nonprojectable, such that
\begin{align*}
\lim_{a \rightarrow \infty} \lambda_{k,j}(a) = \begin{cases} \pm \infty \ & \text{if} \ k \neq 0. \\
\mu_j & \text{if} \ k = 0 ,
\end{cases}
\end{align*}
where $\mu_j$ are the eigenvalues of operator
\begin{align*}
D^N + \frac{i}{4}  \omega_n^{\bbC} \gamma(\mathcal{F}), &\qquad \text{if $n$ is even,} \\
& \\
\begin{pmatrix*}
D^N& + \frac{i}{4} \gamma(\mathcal{F}) \\
\frac{i}{4} \gamma(\mathcal{F}) & -D^N 
\end{pmatrix*},
&\qquad \text{if $n$ is odd.}
\end{align*}

\end{cor}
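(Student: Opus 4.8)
The plan is to derive Corollary \ref{ClassicDirac} directly from Theorems \ref{nonprojectableEigenvalues} and \ref{projectableEigenvalues} by specializing to the zero potential $\Za \equiv 0$. First I would observe that $\Za = 0$ is trivially symmetric and satisfies $\Vert \Za \Vert_{W^{1,\infty}} \leq \Lambda$ for any $\Lambda > 0$, so both theorems apply. For the divergence of the eigenvalues with $k \neq 0$, I would invoke the lower bound
\begin{align*}
\vert \lambda_{k,j}(a) \vert \geq \sinh\left( \arsinh\left( \frac{k}{\la} - \frac{1}{2}\left[\frac{n}{2}\right]^{\frac{1}{2}} C_A - \eps \right) - \eps \right)
\end{align*}
(the term $-\Lambda$ disappears since we may take $\Lambda \to 0$, or simply note $\Za = 0$), valid for $a$ large. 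Since $\la \to 0$ as $a \to \infty$ while $C_A$ is a fixed constant, for any fixed $k \neq 0$ the argument $\frac{k}{\la} - \frac{1}{2}[\frac n2]^{1/2}C_A - \eps$ tends to $+\infty$ (in absolute value), so $\vert \lambda_{k,j}(a) \vert \to \infty$; choosing the sign according to whether the eigenvalue lies above or below $0$ in the numbering gives $\lambda_{k,j}(a) \to \pm\infty$.

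For the $k = 0$ part, in the case of projectable spin structures I would quote directly the second half of Theorem \ref{projectableEigenvalues}: with $\cZa = 0$ (hence $\mathcal Z = 0$) the eigenvalues $\lambda_{0,j}(a)$ converge to the eigenvalues of $D^N + \frac{i}{4}\omega_n^{\bbC}\gamma(\mathcal F)$ if $n$ is even, and of the stated $2\times 2$ block operator with the $\cZa$-entries deleted if $n$ is odd. This is exactly the operator appearing in the statement of the corollary, so nothing further is needed here beyond noting that the limit two-form $\mathcal F$ is the one produced by Corollary \ref{ConvergenceATensor} (or \ref{NonorientableConvergenceATensor} in the nonorientable case), after passing to the relevant subsequence.

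In the case of nonprojectable spin structures there is no eigenvalue with $k = 0$, so the second case of the corollary is vacuous and only the divergence statement applies; this follows from Theorem \ref{nonprojectableEigenvalues} in the same way as above, using that $k \in \bbZ + \frac12$ is always nonzero. Finally I would remark that, as in the proofs of the two theorems, one passes first to invariant metrics $\tga$ via Theorem \ref{invariantMetric} (so that $\Vert \ga - \tga\Vert_{C^1} \to 0$ and hence $\dist(\sigma(\Da), \sigma(\tDa)) \to 0$) and, if $N$ is nonorientable, to the orientation covering; these reductions do not affect the limiting eigenvalues. The only genuinely delicate point is the indexing: one must fix, uniformly in $a$, a numbering $(\lambda_{k,j}(a))_{k,j}$ compatible with the simultaneous diagonalization of $\tDa$ and $\mathcal L_{\Ka}$ — precisely the numbering set up inside the proof of Theorem \ref{nonprojectableEigenvalues} — so that the statement "$\lambda_{k,j}(a) \to \mu_j$ for $k=0$ and $\to \pm\infty$ for $k \neq 0$" is meaningful; but this bookkeeping has already been carried out in the proofs being cited, so the corollary is immediate.
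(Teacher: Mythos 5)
Your proposal is correct and matches the paper's route exactly: the corollary is obtained by specializing Theorems \ref{nonprojectableEigenvalues} and \ref{projectableEigenvalues} to $\Za \equiv 0$, using the lower bound with $\la \to 0$ for the divergence of the $k \neq 0$ eigenvalues and the second half of Theorem \ref{projectableEigenvalues} for the $k=0$ convergence. Your additional remarks on the vacuity of the $k=0$ case for nonprojectable spin structures and on the indexing are consistent with the reductions already carried out in those proofs.
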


Thus, if $N$ is orientable and $\mathcal{F} = 0$ we recover the result of \cite[Theorem 3.1 and Theorem 4.1]{Ammann} under weaker assumptions as the uniform bound on the $T$-tensor implies that $\limsup_{a \rightarrow \infty} \Vert \grad \la \Vert = 0$, where $\la: N \rightarrow \bbR_+$ is the length of the fibers of the $\bbS^1$-principal bundle $f_a: \Ma \rightarrow N$, whereas Ammann only requires that $\limsup_{a \rightarrow \infty} \Vert \grad \la \Vert \leq 1$ if the spin structures are projectable and $\limsup_{a \rightarrow \infty} \Vert \grad \la \Vert \leq \frac{1}{2}$, if the spin structures are nonprojectable. However, if one wants to consider general collapsing sequences in $\mfdspaceCnn$ one has to deal with the case $\mathcal{F} \neq 0$ which causes the perturbation of the Dirac operator in the limit.

\begin{ex}\label{NonVanishingA}
Let $n$ be an even number and consider a fixed non flat $\bbS^1$-bundle $f: (M^{n+1},g) \rightarrow (N^n,h)$ such that $f$ is a Riemannian submersion with totally geodesic fibers of constant length $2\pi$. Denote by $F = f^{\ast} \mathcal{F}$ the curvature of the bundle. Suppose that $M$ is endowed with a projectable spin structure. Consider for each $k \in \bbN$ the cyclic subgroup $\bbZ_k  < \bbS^1$.

Set $M_k \coloneqq \faktor{M_k}{\bbZ_k}$. By construction, there is a well-defined quotient metric $g_k$ on $M_k$. We have that $\lim_{k \rightarrow \infty} (M_k, g_k) = (N,h)$ is a collapse under bounded curvature. Observe that the length of the fibers scale like $l_k = \frac{2 \pi}{k}$ and the curvature like $F_k = k F$. Recall the isometry $Q: L^2(\Sigma N) \rightarrow V_0$. Then
\begin{align*}
\left.D_k\right._{\vert V_0} &= Q \circ D^N \circ Q^{-1} - \frac{1}{4} \gamma\left( \frac{K_k}{l_k} \right) \gamma(l_k F_k) \\
&= Q \circ \left( D^N - \frac{i}{4} \omega_n^{\bbC} \gamma\left( \frac{2\pi}{k} k F \right)  \right) \circ Q^{-1} \\
&= Q \circ (D^N - \frac{i}{4} \omega_n^{\bbC} \gamma( 2 \pi F) ) \circ Q^{-1}. 
\end{align*} 
Therefore we see that the spectrum of the Dirac operator $D_k$ restricted to $V_0$ equals the spectrum of $( D^N - \frac{i}{4} \omega_n^{\bbC} \gamma(2 \pi F) )$ for all $k \in \bbN$.
\end{ex}

In \cite{LottDirac} the behavior of Dirac eigenvalues under collapse with bounded curvature were discussed in great generality. Lott considered collapse of any codimension to smooth and singular limit spaces \cite[Theorem 2 - 4]{LottDirac} and the behavior of Dirac eigenvalues on any $\mathrm{G}$-Clifford bundles, where $\mathrm{G}= \SO(n)$ or $\mathrm{G} = \Spin(n)$. Therefore, his results also includes the Dirac operator on differential forms. 

In this article we restrict ourselves to the setting of the $\Spin$-Clifford bundle induced by the canonical spin representations and to collapse of codimension one. Due to this restriction we obtain the following accentuation of Lotts results:  Let $(\Ma, \ga)_{a \in \bbN}$ be a sequence in $\mfdspaceCnn$ collapsing to an $n$-dimensional Riemannian orbifold $N$. We suppose further that, for $a$ large enough, there is a Riemannian submersion $f_a: M \rightarrow N$. In the case of nonprojectable spin structures the results of Corollary \ref{ClassicDirac} coincides with those of \cite[Theorem 4]{LottDirac}. In the case of projectable spin structures it is shown in \cite[Theorem 2, Theorem 3]{LottDirac} that the spectra of the Dirac operators $\Da$ acting on the spinors of $\Ma$ converges to the spectrum of a first order differential operator $D$. Here $D^2$ is the sum of the Laplacian on $L^2(N, \chi \dvol)$ for some function $\chi$ and a zero-order term depending on the limit of the curvature operators on $\Ma$. In our restricted setting, we have shown in Corollary \ref{ClassicDirac} that $\chi \equiv 1$ and that $D$ is in fact the Dirac operator on the limit space together with a zero-order potential depending on the sequence of the integrability tensors of the Riemannian submersions $f_a: \Ma \rightarrow N$. In the following example we show that in the general case, e.g.\ for the Dirac operator on differential form, the choice of $\chi$ is nontrivial and that the limit of the Dirac spectra depends on the second fundamental form of the fibers, in contrast to the spin case.

\begin{ex}
Consider the torus $T^2 = \lbrace ( e^{is}, e^{it}) : s,t \in \bbR \rbrace$ with the Riemannian metric
\begin{align*}
g_{\eps} \coloneqq \mathrm{d}s^2 \oplus 
\eps^2 c(s)^2 \mathrm{d}t^2,
\end{align*}
for some positive function $c: \bbS^1 \rightarrow \bbR_+$. Then $\lim_{\eps \rightarrow 0} (T^2, g_{\eps} ) = ( \bbS^1, \mathrm{d}s )$. Note that the integrability tensor $A_{\eps} = 0$ for all $\eps$ but the $T$-tensor is characterized by $\frac{c^{\prime}(s)}{c(s)}$, see Lemma \ref{Tensors}. 

Endow $(T^2, g_{\eps})$ with the spin structure induced by the pullback of a chosen spin structure on $\bbS^1$. This defines a projectable spin structure on $(T^2, g_{\eps})$. By Theorem \ref{projectableEigenvalues} the spectrum of the Dirac operator $D_{\eps}$ on $(T^2, g_{\eps})$ restricted to the $\bbS^1$-invariant spinors, i.e.\ $\mathcal{L}_{\partial_t} \varphi = 0$, converge to the spectrum of the Dirac operator $D^{\bbS^1}$ on $\bbS^1$.

Now we take a look on the Dirac operator on forms. In  that case the space of ``projectable forms" is given by
\begin{align*}
V_0 \coloneqq \lbrace f \in C^{\infty}(T^2) : \frac{\partial}{\partial t} f = 0 \rbrace \cup \lbrace \alpha \, \mathrm{d}s \in \Omega^1(T^2) : \frac{\partial}{\partial t} \alpha = 0 \rbrace
\end{align*}
we take some $(f + \alpha \mathrm{d}s) \in V_0$ and calculate that the Dirac operator $\D_{\eps} = \mathrm{d} + \delta$ acts on it as
\begin{align*}
\D_{\eps}(f(s) + \alpha(s) \mathrm{d}s) = \frac{\partial}{\partial s} f \mathrm{d}s - c(s)^{-1} \frac{\partial}{\partial s}(c(s) \alpha(s) ). 
\end{align*}
Observe that as $\eps$ goes to zero $\D_{\eps}$ converges to a first order differential operator $\D_0$ acting on $\Omega^{\ast}(\bbS^1)$. 

Suppose that $c(s) = \exp(g(s))$ is a smooth function on $\bbS^1$ whose exponent is given by $g(s) = \sum_{k=1}^{\infty}  a_k \cos(kt)$, with $a_k \geq -2$ for all $k$. Then one can check that 
\begin{align*}
\sigma(\D_0) = \left\lbrace \pm k \sqrt{1 + \frac{a_k}{2}} : k \in \bbN \right\rbrace \neq \sigma(\D^{\bbS^1} ).
\end{align*}
\end{ex}

Note that Corollary \ref{ClassicDirac} also holds if the limit space is a Riemannian orbifold. Comparing with \cite[Theorem 3]{LottDirac}, we obtain, in this case, a convergence of $\sigma(\Da)$ to $\sigma(D)$ instead of $\sigma(\vert \Da \vert)$ to $\sigma(\vert D \vert)$. 

The main difference of the strategy between \cite{LottDirac} and this article is that in \cite{LottDirac} the results were proven by using the Schr\"odinger-Lichnerowicz formula $D^2 = \nabla^{\ast} \nabla + V$, where $V$ is some potential depending on the $G$-Clifford bundle and the curvature of the manifold, and the results in \cite{LottLaplaceSmooth}, \cite{LottLaplaceSingular} concerning the eigenvalues of Laplace operators on collapsing manifolds. In this article, we did not use the Schr\"odinger-Lichnerowicz formula, but the isometry $Q_k$, see Section 3, to relate the Dirac operator on the manifold to the Dirac operator on the limit space.

We do not know yet if our results can be generalized to collapse of higher codimensions as there are various things to consider additionally, see Remark \ref{HigherCodimension}. However, we believe that it should be possible to modify this strategy to orbifold limit spaces to obtain similar results to Corollary \ref{ClassicDirac} for collapse of higher codimension.

\bibliography{literature.bib}
\bibliographystyle{amsalpha}

\end{document}